\documentclass[12pt]{article}
\usepackage[latin1]{inputenc}
\usepackage{amssymb}
\usepackage{amsmath}
\usepackage{amsthm}

\usepackage{graphicx, color}
\usepackage{epstopdf}


\newcommand{\F}{\mathbb{F}}
\newcommand{\R}{\mathbb{R}}

\newcommand{\Z}{\mathbb Z}

\theoremstyle{definition}

\newtheorem{thm}{Theorem}[section]

\newtheorem{prop}[thm]{Proposition}

\newtheorem{lem}[thm]{Lemma}
 
\newtheorem{rem}[thm]{Remark}

\newtheorem{defi}[thm]{Definition}

\hyphenation{nu-me-ra-ble} \setlength{\oddsidemargin}{-0.05in}

\newcommand{\vs}{\vspace{0.3cm}}

\setlength{\oddsidemargin}{0.5in}

\setlength{\topmargin}{-0.55in}

\setlength{\textheight}{9.1 in}
\setlength{\textwidth}{6in}

\date{}
\author{}


\begin{document}

\title{Spaces of orders of some one-relator groups}
\author{Juan Alonso, Joaqu\'\i n Brum}
\maketitle

\begin{abstract}
We show that certain left orderable groups admit no isolated left orders. The groups we consider are cyclic amalgamations of a free group with a general left orderable group, the HNN extensions of free groups over cyclic subgroups, and a particular class of one-relator groups. In order to prove the results about orders, we develop perturbation techniques for actions of these groups on the line.  
\end{abstract}

\section{Introduction}\label{intro}


A group $G$ is said to be left orderable if it admits a total order invariant by left translations. Left orderability of groups is a wide and active topic of research (see \cite{ghys},\cite{clay rolfsen},\cite{GOD},\cite{KM}). Within this theory, an important object of study is the {\em space of left orders} $\mathcal{LO}(G)$ of a left orderable group $G$. This is the set of left orders on $G$ endowed with a natural topology that makes it a Hausdorff, totally disconnected and compact space \cite{sikora}. For a countable group $G$, this implies that $\mathcal{LO}(G)$ is a Cantor set exactly when it contains no isolated orders. In addition to that, Linnell showed that this space is either finite or uncountable \cite{linnell}, and Tararin classified the groups that have finitely many left orders \cite[Theorem 5.2.1]{KM}. In light of these results, one of the main interesting problems concerning the topology of the space of left orders is to determine which left orderable groups admit isolated orders. 

This problem turns out to be a complex one, as shown by the partial results that have been obtained. For virtually solvable groups, Rivas and Tessera gave a complete description of this spaces showing that they are either finite or Cantor sets \cite{rivas tessera}. Free products of left orderable groups admit no isolated orders \cite{rivas_free} and neither does cyclic amalgams of free groups \cite{abr}. On the other hand, $\F_n\times\Z$ admits isolated orders if and only if $n$ is even \cite{ping}. 

Here we provide a generalization of the results about amalgams in \cite{abr}.      

\begin{thm}\label{manija} Let $G=\F_n*_{\Z}H$ be a left orderable group with $n\geq 2$. Then $G$ has no isolated orders. 
\end{thm}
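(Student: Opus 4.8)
The plan is to prove that no left order of $G$ is isolated by working with dynamical realizations of orders. Recall that a countable left orderable group embeds in $\mathrm{Homeo}_+(\R)$, and that a left order $\prec$, together with the orbit of a suitably chosen basepoint $x_0$, produces such an embedding $\rho_\prec : G \to \mathrm{Homeo}_+(\R)$ in which $g \prec g'$ is read off from $\rho_\prec(g)(x_0) < \rho_\prec(g')(x_0)$; conversely, deforming such an action while keeping it faithful yields new orders. Thus, fixing $\prec \in \mathcal{LO}(G)$ and a finite set $F \subseteq G$, it suffices to exhibit a faithful action $\rho$ whose induced order agrees with $\prec$ on $F$ but differs from $\prec$ globally. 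First I would pass to $\rho_\prec$ and record the homeomorphism $\phi := \rho_\prec(w) = \rho_\prec(h)$, where $w \in \F_n$ generates the amalgamated cyclic subgroup and $h \in H$ is its partner.

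The key structural observation is that a homomorphism $G \to \mathrm{Homeo}_+(\R)$ is exactly a pair of homomorphisms out of $\F_n$ and $H$ agreeing on the amalgamated generator. Hence any deformation that modifies only $\rho_\prec|_{\F_n}$, leaves $\rho_\prec|_H$ untouched, and preserves the single homeomorphism $\phi = \rho_\prec(w)$ automatically respects the relation $w = h$ and extends to a genuine action of $G$. The problem therefore reduces to perturbing a faithful $\F_n$-action subject only to the constraint that the image of $w$ remain equal to $\phi$. This is exactly where the hypothesis $n \geq 2$ enters: once one degree of freedom is spent pinning $\rho(w) = \phi$, a free factor of rank at least two retains freedom transverse to $w$ in which to deform, whereas for rank one the factor is abelian and the amalgam may well carry isolated orders. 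Concretely, I would choose an element $a \in \F_n$ not commuting with $w$, identify an interval $I$ governed by the ping-pong dynamics of $\langle a, w \rangle$ and disjoint from the finitely many points in the $\rho_\prec$-orbit of $x_0$ needed to evaluate the elements of $F$, and modify $\rho_\prec(a)$ by conjugation by a small homeomorphism with $\supp \subseteq I$.

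It then remains to check that this deformation (i) preserves faithfulness, via a ping-pong argument for the free factor that is stable under $C^0$-small perturbations respecting the Schottky configuration, so that no new relations are created across the amalgam; and (ii) changes the induced order on some element outside $F$ while leaving the order on $F$ intact, the latter because $\supp$ of the perturbation avoids all the points relevant to comparing elements of $F$. Varying the perturbation parameter, or iterating the construction, produces infinitely many distinct left orders accumulating at $\prec$, so $\prec$ is not isolated.

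The main obstacle I anticipate is arranging the deformation so that $\rho_\prec(w)$ stays \emph{exactly} equal to $\phi$ while the order genuinely moves. Since $w$ need not be primitive in $\F_n$, "fixing $\rho(w)$" is a nonlinear constraint on the generator images rather than the freezing of a single generator, and making the transverse freedom explicit — simultaneously compatible with the ping-pong control required for faithfulness and with the support disjointness required for closeness to $\prec$ on $F$ — is the technical heart of the argument. This is presumably exactly what the perturbation lemmas for actions on the line, announced in the abstract, are designed to provide.
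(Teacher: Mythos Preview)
Your high-level framework is correct and matches the paper: realize the order dynamically, perturb only the $\F_n$-factor while keeping $\rho(w)=\phi$ so that the perturbation glues with the unchanged $H$-action, and conclude that the order was not isolated. But the execution you sketch has a genuine gap, and the paper's actual mechanism is quite different from yours.

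First, the gap. Your proposed perturbation---modify $\rho_\prec(a)$ by a bump supported in an interval $I$---does not preserve $\rho(w)=\phi$ unless $w$ happens not to involve $a$, and even then the ping-pong structure you invoke need not be present for a general dynamical realization. An arbitrary order on $\F_n$ gives no Schottky configuration to appeal to, so neither the ``transverse freedom'' nor the $C^0$-stability of faithfulness is available in the form you describe. You correctly flag this as the main obstacle, but the proposal does not resolve it.

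Second, the paper's route. The paper does \emph{not} aim for a faithful perturbation that directly shifts the order. Instead it deliberately produces a perturbation $\bar\rho$ with a \emph{non-trivial stabilizer} on the orbit of $0$, and then invokes a general principle (Proposition~\ref{aislado}): the resulting partial order admits at least two distinct convex extensions to total left orders agreeing with $\prec$ on any prescribed finite set, so $\prec$ is not isolated. This sidesteps faithfulness entirely.

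The perturbation itself is built by the main technical lemma (Lemma~\ref{tronco}). One pushes a finite increasing $\rho$-trajectory from $0$ out past a large compact $K$, records its $\F_n$-pieces, and redefines each generator $\rho(x_i)$ only beyond a threshold $m_i$ so that (a) the recorded trajectories are untouched, (b) the new $\rho_0(w)$ agrees with $\phi$ up to the endpoint $p$ of the trajectory and has no fixed points beyond $p$, and (c) a non-trivial stabilizer is created at $p$. This $\rho_0$ is then conjugated into $(-\infty,q]$, where $q$ is the first fixed point of $\phi$ to the right of $p$, using Lemma~\ref{promotion} to make the conjugation carry $\rho_0(w)$ exactly to $\phi|_{(-\infty,q]}$. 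On $(q,+\infty)$ one invokes the surjectivity of the word map (Lemma~\ref{tronco2}) to fill in an $\F_n$-action with image of $w$ equal to $\phi|_{(q,+\infty)}$. Splicing these two pieces yields $\rho'(w)=\phi$ globally, and gluing with $\rho|_H$ gives $\bar\rho$. None of this relies on ping-pong or Schottky dynamics; the control is purely combinatorial, via trajectory graphs of the cyclically reduced word $w$.
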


The orderability of $G$ in Theorem \ref{manija} is equivalent to that of $H$ since the amalgamating subgroup is cyclic, as shown in \cite{BG camb}. The hypothesis that $n\geq 2$ is necessary, for example, the torus knot groups $\langle a,b|a^m=b^k\rangle$ have isolated orders \cite{navas},\cite{ito}. 

The situation for general amalgamated products is more complex, even when the amalgamating subgroups are cyclic. Ito \cite{ito2} constructs a wide class of amalgamated products of the form $G*_{\Z}H$ that do have isolated orders. The groups $G$ and $H$ considered in \cite{ito2} both have isolated orders. Theorem \ref{manija} points in the other direction, for amalgams $G*_{\Z}H$ where one of the factors is a non abelian free group.  

The results and techniques used in \cite{abr} suggest the question:

\vs
{\bf Question 1:} Can a one-relator group generated by three or more elements have an isolated order?
\vs

We obtain some partial results in this direction. The next one can also be seen as a generalization of Theorem 1.1 in \cite{abr}, this time dealing with the case of an HNN extension.

\begin{thm}\label{hnn} Let $G=\langle t,x_1,...,x_n|tw_1t^{-1}=w_2\rangle$ with $n\geq 2$ and non trivial $w_1,w_2\in \langle x_1,...,x_n \rangle$. Then $G$ has no isolated orders. 
\end{thm}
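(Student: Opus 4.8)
The plan is to argue dynamically, via the correspondence between left orders on the countable group $G$ and its actions on the line. Recall that an order $\preceq$ is isolated in $\mathcal{LO}(G)$ precisely when it is determined by finitely many inequalities; thus to prove Theorem \ref{hnn} it suffices to show that for every left order $\preceq$ on $G$ and every finite set $F\subseteq G$ there is a left order $\preceq'\neq\preceq$ agreeing with $\preceq$ on $F$. I would encode $\preceq$ by its dynamical realization $\rho\colon G\to \mathrm{Homeo}_+(\R)$, where the $\preceq$-order of two elements $g,h$ is read off from the positions of $\rho(g)x_0$ and $\rho(h)x_0$ for a basepoint $x_0$. Producing a nearby distinct order then amounts to perturbing $\rho$ to an action $\rho'$ that keeps the finitely many points $\{\rho(f)x_0: f\in F\}$ in the same relative (and basepoint-relative) position while moving some other orbit point across $x_0$; the perturbation lemmas developed earlier in the paper are exactly designed to convert such a local modification of the realization into a genuine new order.

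First I would bring in the combinatorial structure of the HNN extension. By Britton's lemma, every element of $G$ has a reduced form $g_0 t^{\varepsilon_1} g_1\cdots t^{\varepsilon_k} g_k$ with $g_i\in \F_n=\langle x_1,\dots,x_n\rangle$ and the usual pinch conditions, so that $G$ acts on its Bass--Serre tree with vertex stabilizers conjugate to $\F_n$ and edge stabilizers conjugate to $\langle w_1\rangle=\langle w_2\rangle$. This organizes the orbit of $x_0$ and lets me localize a perturbation to the dynamics of the vertex group $\F_n$. The core of the argument would then exploit that $n\geq 2$, so $\F_n$ is nonabelian free and its action $\rho|_{\F_n}$ is highly flexible: free actions admit small perturbations supported on prescribed intervals, and indeed $\F_n$ has no isolated orders. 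I would choose such a perturbation of $\rho|_{\F_n}$ that changes the order induced on $\F_n$, hence on $G$, away from $F$, and then extend it to all of $G$ using the HNN presentation.

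The hard part---and the reason the relation $t w_1 t^{-1}=w_2$ is doing the work---will be making the perturbation of the vertex action compatible with the conjugacy relation while remaining nontrivial. Extending a perturbed action $\rho'|_{\F_n}$ to $G$ requires a homeomorphism $\rho'(t)$ conjugating $\rho'(w_1)$ to $\rho'(w_2)$; equivalently, the perturbation must preserve the edge-group data, that is, the images of $w_1$ and $w_2$, while still altering the order. I would arrange this by perturbing only in regions of the line disjoint from the relevant dynamics of $w_1$, $w_2$ and $t$ on the finite set $F$, so that $\rho'(w_1)=\rho(w_1)$ and $\rho'(w_2)=\rho(w_2)$ and hence the unchanged $\rho'(t)=\rho(t)$ still realizes the relation, yet some $\F_n$-orbit point is pushed across the basepoint. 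Since $w_1,w_2$ are single words and $\F_n$ is free of rank at least $2$, such room exists; verifying that the resulting $\rho'$ is a faithful $G$-action inducing an order that differs from $\preceq$ but coincides with it on $F$ completes the proof.
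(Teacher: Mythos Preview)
Your outline has a real gap at the extension step. You propose to perturb $\rho|_{\F_n}$ while keeping $\rho'(w_1)=\rho(w_1)$ and $\rho'(w_2)=\rho(w_2)$ \emph{globally}, so that the unchanged $\rho'(t)=\rho(t)$ still realizes the HNN relation. But this cannot work in general: take $n=2$, $w_1=x_1$, $w_2=x_2$. Then fixing $\rho(w_1)$ and $\rho(w_2)$ on all of $\R$ means fixing both $\rho(x_1)$ and $\rho(x_2)$, so there is no perturbation at all. The phrase ``such room exists'' is exactly where the argument fails; the freeness of $\F_n$ with $n\ge 2$ gives flexibility in building actions, but not under the constraint that two prescribed words have globally prescribed images. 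More generally, even when $w_1,w_2$ are long, a perturbation of some $\rho(x_i)$ supported on an interval will typically alter $\rho(w_1)$ or $\rho(w_2)$ somewhere, so insisting on global equality is too rigid.

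The paper's proof takes the opposite route: it \emph{allows} $\rho'(w_1)$ and $\rho'(w_2)$ to change, but uses Lemma~\ref{tronco} to control them---specifically, it forces $\rho'(w_1)$ to agree with $\rho(w_1)$ on a half-line $(-\infty,p]$ and to have no fixed points to the right of $p$, while preserving the finitely many trajectories needed to keep $p$ in the orbit of $0$ and to create a nontrivial stabilizer there. The extension to $G$ is then done by constructing a \emph{new} conjugator $\bar\rho(t)=\varphi$ from $\rho'(w_1)$ to $\rho'(w_2)$: Lemma~\ref{promotion} upgrades the weak conjugation (which agrees with $\rho(t)$ on $(-\infty,p]$) to a true one, and when $\rho'(w_2)$ picks up extra fixed points one iterates Lemma~\ref{tronco2} on successive intervals to push the construction to $+\infty$. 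Finally, the conclusion goes through Proposition~\ref{aislado} (nontrivial stabilizer on the orbit of $0$), not by ``moving an orbit point across the basepoint''. The missing idea in your plan is precisely this: one must redefine $t$, not preserve it.
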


One-relator groups are left orderable unless they have torsion \cite{brodskii}, and they only have torsion when the relation is a proper power. Thus the groups in Theorem \ref{hnn} are left orderable. Again, the result does not hold for $n=1$. This can be seen for the Klein bottle group $\langle a,b|aba^{-1}=b^{-1}\rangle$, that has finitely many orders \cite{GOD}. 

Following Question 1, we consider one-relator groups with more complex relations. The techniques we developed seem well adjusted to the case where the relation contains only positive powers of some given generator $t$. Namely, $G=\langle t,x_1,...,x_n|tw_1\cdots tw_k=1\rangle$ for $n\geq 2$ and $w_i\in\langle x_1,...,x_n\rangle$. If $k=1$ then $G$ is free, and for $k=2$ we can change the presentation to $G=\langle s,x_1,...,x_n| s^2w_1^{-1}w_2=1\rangle$ that is an amalgam covered in Theorem \ref{manija}. Our techniques allow us to obtain the case $k=3$, that turns out to be quite non trivial.

\begin{thm}\label{positiveword} Let $G=\langle t,x_1,...,x_n|tw_1tw_2tw_3=1\rangle$ with $n\geq 2$ and $w_i\in\langle x_1,...,x_n\rangle$ for $i=1,2,3$. Assume that $w_i\neq w_j$ for some $i,j$. Then $G$ has no isolated orders. 
\end{thm}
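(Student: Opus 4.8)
The plan is to show that no left order on $G=\langle t,x_1,\dots,x_n\mid tw_1tw_2tw_3=1\rangle$ is isolated by perturbing its associated dynamical realization on the line. Recall the standard translation: a left order on a countable group $G$ corresponds to an action of $G$ on $\mathbb{R}$ by orientation-preserving homeomorphisms with no global fixed point on a chosen orbit, and an order is isolated precisely when its dynamical realization is ``rigid'' in the sense that nearby actions (respecting the combinatorics of the orbit) all induce the same order. To prove non-isolation it suffices, therefore, to produce for any given order $\prec$ a sequence of distinct orders converging to $\prec$, which I would do by exhibiting nontrivial continuous deformations of the dynamical realization that change the relative position of some group elements on the orbit while still satisfying the single relation $tw_1tw_2tw_3=1$.

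The key structural observation I would exploit is that the free subgroup $F=\langle x_1,\dots,x_n\rangle$ with $n\geq 2$ is acted upon freely in the relevant sense, and that the element $t$ is constrained only by the cyclic relation, which expresses $t$ as a kind of ``cube-root-like'' word interacting with the $w_i$. Concretely, I would first fix the dynamical realization $\varphi\colon G\to \mathrm{Homeo}_+(\mathbb{R})$ associated to $\prec$, and then look for a region of the line — an interval bounded by two consecutive points of the orbit, or a gap in the support of some $x_i$ — where I have freedom to insert a small perturbation supported there. The strategy from the amalgam and HNN cases (Theorems \ref{manija} and \ref{hnn}) is to build the perturbation on the free factor $F$, where perturbations are essentially unconstrained because $F$ is free and $n\geq 2$, and then to \emph{solve} for a compatible choice of $t$ forced by the relation. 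Writing the relation as $t w_1 t w_2 t w_3 = 1$, once the images $\varphi(w_i)$ of the $w_i$ are perturbed, one must find $\tilde t \in \mathrm{Homeo}_+(\mathbb{R})$ solving $\tilde t\, \tilde w_1\, \tilde t\, \tilde w_2\, \tilde t\, \tilde w_3 = \mathrm{id}$, and then verify that the resulting order differs from $\prec$ yet lies arbitrarily close to it.

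The main obstacle, and the reason $k=3$ is ``quite non trivial,'' is precisely this solvability step: unlike the amalgam case $s^2 = w_1^{-1}w_2$ where $t$ appears to a power that can be handled by taking square-roots of homeomorphisms, here $t$ appears three times \emph{interleaved} with three different words $w_1,w_2,w_3$, so one cannot simply conjugate all the $t$'s together. The equation $\tilde t\,\tilde w_1\,\tilde t\,\tilde w_2\,\tilde t\,\tilde w_3=\mathrm{id}$ is a genuinely nonlinear functional equation for the single unknown homeomorphism $\tilde t$, and solving it requires understanding how $\tilde t$ is determined by its action on a fundamental domain and how that determination propagates under the three-fold composition. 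I expect the heart of the argument to be a fixed-point or graph-transform construction: one designs $\tilde t$ on a suitable interval, uses the relation to extend it consistently to the whole line, and checks that the extension remains an orientation-preserving homeomorphism with the prescribed small perturbation. The hypothesis $w_i \neq w_j$ for some pair $i,j$ is what guarantees enough ``nondegeneracy'' to run this construction and to ensure the perturbed action is genuinely different from the original — presumably if all $w_i$ were equal the relation would degenerate (e.g. to a relation like $(tw)^3=1$ forcing extra symmetry or torsion-like rigidity), which is exactly the situation one must avoid.

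Finally, having produced a one-parameter (or sequential) family of perturbed actions $\varphi_\varepsilon$ with $\varphi_\varepsilon \to \varphi$ as $\varepsilon \to 0$, I would read off the induced orders $\prec_\varepsilon$ and verify two things: that $\prec_\varepsilon \to \prec$ in $\mathcal{LO}(G)$ (which follows from continuity of the realization together with the fact that the perturbation is localized away from the finitely many orbit comparisons defining a basic neighborhood of $\prec$), and that infinitely many of the $\prec_\varepsilon$ are distinct from $\prec$ (which follows from the perturbation genuinely moving some orbit point across another). Combined, these show $\prec$ is not isolated; since $\prec$ was arbitrary, $\mathcal{LO}(G)$ has no isolated points, as claimed.
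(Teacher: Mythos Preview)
Your high-level plan matches the paper's strategy: perturb the restriction to $\F_n=\langle x_1,\dots,x_n\rangle$, then solve the functional equation $\tilde t\,\tilde w_1\,\tilde t\,\tilde w_2\,\tilde t\,\tilde w_3=\mathrm{id}$ for $\tilde t$, and conclude via the order/action dictionary. But two genuine ideas are missing, and without them the outline does not close.

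First, you have no mechanism for solving the functional equation. ``Fixed-point or graph-transform'' is a placeholder; the actual obstacle is that a partial solution $h:(-\infty,p]\to(-\infty,h(p)]$ need not extend. The paper's method is to study the ``last'' trajectory $S(E,h,e_h)=(a_1,b_1,a_2,b_2,a_3,b_3,a_1)$ and show that if not all $b_j$ coincide then one can push $p$ forward: when one $b_j$ strictly dominates, $h$ is forced uniquely by composing the other two applications of $h$ with the $g_i$; when two $b_j$ coincide, one reduces to a \emph{square-root} problem for a homeomorphism (Lemma~\ref{raiz}). The hypothesis that blocks $b_1=b_2=b_3$ is that $g_i(e_h)\neq g_j(e_h)$ for some $i,j$. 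None of this is visible in your sketch, and it is exactly the content that makes $k=3$ work.

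Second, and more subtly, you do not explain how the perturbation on $\F_n$ is engineered so that the extension lemma \emph{applies on an entire half-line}. The paper's Lemma~\ref{tronco} is used with $w=w_{l+1}^{-1}w_l$ (cyclically reduced thanks to Lemma~\ref{lasmagias}) so that the perturbed $\rho'(w_{l+1}^{-1}w_l)$ has no fixed points past $q_l$; this is precisely what guarantees $g_l(x)\neq g_{l+1}(x)$ for all $x\geq e_h$, hence the partial solution extends globally. Your proposal treats the $\F_n$-perturbation as ``essentially unconstrained'' and the role of $w_i\neq w_j$ as vague nondegeneracy, but in fact the perturbation must simultaneously (i) preserve specified trajectories so the base point stays in the orbit, (ii) create a nontrivial stabilizer, and (iii) force $\rho'(w_{l+1}^{-1}w_l)$ to be fixed-point-free to the right. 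Finally, the paper does not produce orders by ``moving an orbit point across another''; it produces a nearby action with nontrivial stabilizer on the orbit of $0$ and invokes Proposition~\ref{aislado} (convex extension gives two distinct completions). Your endgame would need to be rewritten along those lines.
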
 
\begin{rem} If $w_1=w_2=w_3$ then the group $G$ in Theorem \ref{positiveword} has torsion and therefore is not left orderable. 
\end{rem}

The conclusion of Theorem \ref{positiveword} does not hold when $n=1$, as Proposition 8.1 in \cite{dehornoy} shows that the groups $\langle t,x|tx^ptx^ptx^{-1}\rangle$ for $p\geq 1$ have isolated left orders.

We would conjecture that the groups of the form $G=\langle t,x_1,...,x_n|tw_1\cdots tw_k=1\rangle$ with $n\geq 2$ do not have isolated orders in general. However, we meet technical obstructions in our approach when $k\geq 4$.

The method for proving these theorems involve the close relationship between left orders and actions on the line. A countable group $G$ is left orderable if and only if it admits a faithful action by orientation preserving homeomorphisms of the line. Indeed, a left order on $G$ induces an action on the line via the construction called {\em dynamical realization} (see Section \ref{preliminaresordenes}). Furthermore, the topology of $\mathcal{LO}(G)$ is related to rigidity properties on the space of such actions. More precisely, an isolated order induces an action that is {\em structurally stable} (also called {\em rigid}), see \cite{abr} for the definition. In \cite{abr} it is shown that cyclic amalgams of free groups have no rigid actions on the line, implying they have no isolated orders. Here we do not deal with rigid actions in order to avoid more technicalities, instead we use a weaker version of this principle, namely Proposition \ref{aislado}.

The proofs of our three theorems follow the same rough strategy. We start from the dynamical realization of a given order and construct an arbitrarily small perturbation that has non trivial stabilizer on the orbit of 0, then we use Proposition \ref{aislado} to conclude. The groups in our theorems contain free subgroups, namely the factor $\F_n$ in Theorem \ref{manija}, and the subgroups generated by $x_1,\ldots,x_n$ in Theorems \ref{hnn} and \ref{positiveword} (see Freiheitsatz Theorem in \cite{Baumslag}), and the strategy in all cases is to perturb the action restricted to these free subgroups in a way that can be extended to an action of the whole group. Our main technical tool to achieve that is Lemma \ref{tronco}, that gives a way to perturb actions of free groups creating non trivial stabilizers, while controlling the behaviour of a particular element and of finitely many partial orbits.  

We would like to point out that Lemma \ref{tronco2} is important on its own right. Let $Rep(G,H)$ denote the set of of group representations of $G$ in $H$.  For each $w\in\F_n$, and any group $H$, we can define a  {\em word map} from  $Rep(\F_n,H)$ to $H$ that associates a representation $\rho$ to $\rho(w)\in H$. Lemma \ref{tronco2} says that when $H=Homeo_{+}(\R)$, the group of orientation preserving homeomorphisms of the real line, the word map is surjective for any non trivial $w\in\F_n$.
\vs

{\bf Acknowledgements.} The authors want to thank Crist\'obal Rivas for his invitation to Santiago de Chile and for several fruitful conversations, and the referees for many useful suggestions.

\section{Preliminaries}


\subsection{Left orders and actions on the line}\label{preliminaresordenes}

A left order on a group $G$ is a total order $<$ satisfying  that given $f,g,h\in G$ such that $f < h $ then $ gf < gh$. If $G$ admits a left order we say that $G$ is {\em left orderable}. The reader unfamiliar with this notion may wish to consult \cite{clay rolfsen}, \cite{GOD}, \cite{KM}. The groups under consideration in this paper are left orderable as we already mentioned in section \ref{intro}.

A natural topology can be defined on the set $\mathcal{LO}(G)$ of all left orders on $G$, making it a compact and totally disconnected space. A local base at a left order $<$ is given by the sets \[V_{g_1,\ldots,g_n}:=\{<' \in\mathcal{LO}(G)\mid 1 <'g_i\},\]
where $\{g_1,\ldots,g_n\}$ runs over all finite subsets of $<$-positive elements of $G$. In particular, a left order $<$ is isolated in $\mathcal{LO}(G)$ if and only if there is a finite set $S\subset G$ such that $<$ is the only left order satisfying \[id < s\, , \text{ for every $s\in S$}.\]
When the group is countable this topology is metrizable \cite{clay rolfsen}, \cite{GOD},\cite{sikora}. For instance, if $G$ is finitely generated, and $B_n$ denotes the ball of radius $n$ with respect to a finite generating set, then we can declare that $dist(<_1,<_2)=1/n$, if $B_n$ is the largest ball in which $<_1$ and $<_2$ coincide.

When the group $G$ is countable, for every left order $<$ on $G$, one can attach a fixed-point-free action $\rho: G\to Homeo_+(\R)$ that models the left translation action of $G$ on $(G,<)$, in the sense that
\begin{equation} f< g \Leftrightarrow \rho(f)(0)< \rho(g)(0). \label{eq dyn real}\end{equation}
This is the so called, {\em dynamical realization} of $<$ (which is unique up to conjugation), and $0$ is sometimes called the {\em base point}, see \cite{clay rolfsen}, \cite{GOD}, \cite{ghys}.

On the other hand, any representation $\rho: G\to Homeo_+(\R)$ defines a {\em partial} left invariant order on $G$ through equation \ref{eq dyn real}. This is a total order exactly when the stabilizer of $0$ under $\rho$ is trivial.

Given a group $G$ we consider the set $Rep(G,Homeo_+(\R))$ of group representations from $G$ to $Homeo_+(\R)$ endowed with the pointwise convergence. That is, $\rho_n$ converges to $\rho$ if and only if $\rho_n(g)$ converges to $\rho(g)$ for all $g\in G$, where the convergence $\rho_n(g)\to\rho(g)$ is given by the compact open topology: for every positive $\varepsilon$ and for every compact set $K\subset M$ there is $n_0$ such that $n\geq n_0$ implies  \[\sup_{x\in K} |\rho_n(g)(x)-\rho(g)(x)|\leq \varepsilon.\]

\begin{rem} \label{separability} Observe that the convergence of $\rho_n\to \rho$ in $Rep(G, Homeo_+(\R))$ is equivalent to require that $\rho_n(g)\to\rho(g)$ for every $g$ in a generating set of $G$. 
\end{rem}

The next result is a way to relate the topology of $\mathcal{LO}(G)$ with that of $Rep(G,Homeo_{+}(\R))$, and will be the key tool to prove all our theorems.

\begin{prop} \label{aislado} Let $\rho\in Rep(G,Homeo_{+}(\R))$ be the dynamical realization of a total left order $<$ on $G$ (in the sense of equation \ref{eq dyn real}). If $\rho$ can be arbitrarily approximated by representations that have non trivial stabilizers on the orbit of $0$, then $<$ is not isolated in $\mathcal{LO}(G)$. 
\end{prop}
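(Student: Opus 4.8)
The plan is to prove the statement directly by producing, for every finite set of $<$-positive elements, a different left order that keeps all of them positive. Indeed, by the description of the basic neighbourhoods $V_{g_1,\ldots,g_n}$ recalled just above, saying that $<$ is not isolated is exactly saying that for every finite set $S=\{g_1,\ldots,g_n\}$ of $<$-positive elements there is a left order distinct from $<$ in which all elements of $S$ are still positive. So I fix such an $S$, enlarge it to a finite set $F$ containing $S\cup\{1\}$, and aim to build a left order coinciding with $<$ throughout $F$ but not equal to $<$. Since $\rho$ is the dynamical realization of the \emph{total} order $<$, the orbit map $f\mapsto\rho(f)(0)$ is injective, so the points $\{\rho(f)(0):f\in F\}$ are pairwise distinct; let $\delta>0$ be the minimal gap between consecutive such points.

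Next I would invoke the approximation hypothesis. Having a non trivial stabilizer somewhere on the orbit of $0$ is equivalent to having a non trivial stabilizer of $0$ itself, because stabilizers of points in a single orbit are conjugate; so I may choose $\rho_n\to\rho$ with $H_n:=\mathrm{Stab}_{\rho_n}(0)\neq\{1\}$. By pointwise convergence, for $n$ large enough $|\rho_n(f)(0)-\rho(f)(0)|<\delta/3$ for all $f\in F$, hence the points $\{\rho_n(f)(0):f\in F\}$ remain pairwise distinct and appear in the same order as for $\rho$. In other words, the \emph{partial} left invariant order $<_{\rho_n}$ defined by $\rho_n$ through equation \ref{eq dyn real} restricts to a total order on $F$ that agrees with $<$.

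It remains to upgrade $<_{\rho_n}$ to an honest total left order on all of $G$ while guaranteeing a difference from $<$. The relation $f\sim g\iff\rho_n(f)(0)=\rho_n(g)(0)$ has as equivalence classes the left cosets of $H_n$, and $<_{\rho_n}$ descends to a total, left invariant order $\lhd$ on the coset space $G/H_n$. Choosing a left order $<_{H_n}$ on the (left orderable) subgroup $H_n$, I form the lexicographic order on $G$: set $a\prec b$ when either $aH_n\neq bH_n$ and $aH_n\lhd bH_n$, or $aH_n=bH_n$ and $a^{-1}b>_{H_n}1$. A direct verification shows $\prec$ is a total left order extending $<_{\rho_n}$ (left invariance in the first case uses $G$-invariance of $\lhd$, and in the second uses $(ca)^{-1}(cb)=a^{-1}b$). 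Performing this construction once with $<_{H_n}$ and once with its reverse order produces two left orders $\prec,\prec'$ that disagree on the pair $\{1,h\}$ for any $h\in H_n\setminus\{1\}$, so $\prec\neq\prec'$. Since distinct elements of $F$ have distinct images under $\rho_n$ and therefore lie in distinct $H_n$-cosets, the within-coset refinement never touches $F$, so both $\prec$ and $\prec'$ coincide with $<$ on $F$; being distinct, at least one of them differs from $<$, and that one lies in $V_{g_1,\ldots,g_n}$, as required.

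The main obstacle is this last step: one must check carefully that the lexicographic recipe yields a genuine left order even though $H_n$ need not be normal, so that only a $G$-invariant order on the coset space is available rather than a quotient group order, and one must use the non triviality of the stabilizer — supplied precisely by the approximation hypothesis — to manufacture two genuinely distinct extensions and thereby force one of them away from $<$. The earlier steps are soft, resting only on injectivity of the orbit map of a dynamical realization and on the fact that pointwise convergence preserves strict inequalities among finitely many orbit points.
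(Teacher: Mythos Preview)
Your proof is correct and follows essentially the same route as the paper's: pick a basic neighbourhood $V_{g_1,\ldots,g_n}$, approximate $\rho$ by some $\rho'$ with nontrivial $\mathrm{Stab}_{\rho'}(0)$, and extend the resulting partial order to two distinct total left orders via the lexicographic (convex extension) construction, one of which must differ from $<$. The only differences are cosmetic---you spell out the convex extension procedure explicitly rather than citing it, and you make explicit the observation that a nontrivial stabilizer on the orbit of $0$ forces a nontrivial stabilizer at $0$ itself by conjugacy---but the argument is the same.
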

\begin{proof} Let $F\subseteq G$ be a finite set with $1<g$ for every $g\in F$. We take a neighbourhood $V$ of $\rho$ so that if $\rho'\in V$ then $0<\rho'(g)(0)$ for $g\in F$. By our hypothesis, there exists $\rho'\in V$ that has non trivial stabilizer on the orbit of $0$. This induces a partial left order $\prec$ on $G$. Since $H=Stab_{\rho'}(0)$ is a subgroup of a left orderable group, it is also left orderable and has at least two different left orders. By the convex extension procedure (see \cite[\S2.1]{GOD}) we can extend $\prec$ to at least two different total left orders $<_1$ and $<_2$ on $G$, satisfying $1 <_i g$ for $g\in F$ and $i=1,2$. One of them must be different from $<$.
\end{proof}

\subsection{Conjugacy and roots in $Homeo_{+}(\R)$}

Here we present some facts and constructions on line homeomorphisms that will be needed in the sequel. Given $\phi\in Homeo_{+}(\R)$ we define the following sets:
\[\begin{aligned}
    Fix(\phi)=\{x\mid \phi(x)=x\} \\
    Inc(\phi)=\{x\mid \phi(x)>x\} \\
    Decr(\phi)=\{x\mid \phi(x)<x\}
  \end{aligned}\]
  
These sets help us study the conjugacy class of $\phi$. If $\psi\phi_1\psi^{-1}=\phi_2$, then $\psi$ induces bijections between the corresponding sets for $\phi_1$ and $\phi_2$. (Namely, $\psi(Fix(\phi_1))=Fix(\phi_2)$ and so on). On the other hand, two homeomorphisms $\phi_1$ and $\phi_2$ are conjugated in $Homeo_{+}(\R)$ if there exists $\psi\in Homeo_+(\R)$ that maps $Fix(\phi_1)$ to $Fix(\phi_2)$ and $Inc(\phi_1)$ to $Inc(\phi_2)$ (and so maps $Decr(\phi_1)$ to $Decr(\phi_2)$). With this in mind, we define a {\em weak conjugation} as follows.

\begin{defi}\label{weakdefi} For $\psi,\phi_1, \phi_2$ homeomorphisms of the real line, we will say that $\psi$ is a {\em weak-conjugation} from $\phi_1$ to $\phi_2$ if \begin{itemize}
\item  $\psi(Fix(\phi_1))=Fix(\phi_2)$ and
\item $\psi(Inc(\phi_1))=Inc(\phi_2)$. \end{itemize}
Additionally, if for an interval $I$ we have that $\psi\phi_1(x)=\phi_2\psi(x)$ for all $x\in I$ we will say that the weak conjugation $\psi$ is {\em strong} on $I$.
\end{defi}

Observe that conjugacy and weak-conjugacy classes are identical, but it is much easier to find/build weak conjugations rather than true conjugating elements. We will need a result that allows us to pass from a weak conjugation to a conjugation, while respecting the parts in which the weak conjugation is strong. This is Lemma 2.7 in \cite{abr}, that we state below.

\begin{lem}\label{promotion} {\em Let $\psi,\phi_1,\phi_2\in Homeo_+(\R)$. If $\psi$ is a weak-conjugation from $\phi_1$ to $\phi_2$ that is strong on a interval $I$, then there exists a conjugation $\bar{\psi}$ from $\phi_1$ to $\phi_2$ such that:
\begin{itemize}
\item $\bar{\psi}(x)=\psi(x)$ for every $x\in I$ and
\item $\bar{\psi}(x)=\psi(x)$ for every $x\in Fix(\phi_1)$.
\end{itemize}
Moreover, $\bar \psi$ agrees with $\psi$ over $I\cup \phi_1(I)$.}

\end{lem}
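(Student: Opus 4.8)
The plan is to construct $\bar\psi$ one connected component of $\R\setminus Fix(\phi_1)$ at a time, keeping $\bar\psi=\psi$ on $Fix(\phi_1)$, and then glue. First I would record two structural observations. Since $\psi$ is a weak-conjugation, it carries each maximal interval of movement $J$ of $\phi_1$ onto a maximal interval of movement $\psi(J)$ of $\phi_2$ of the same type (i.e.\ $\phi_1$ increasing on $J$ iff $\phi_2$ increasing on $\psi(J)$), and it sends $Fix(\phi_1)$ onto $Fix(\phi_2)$. Second, because $\phi_1$ preserves $Fix(\phi_1)$ and each such $J$, one has $\phi_1(I)\cap J=\phi_1(I\cap J)$ for every $J$ and $\phi_1(I)\cap Fix(\phi_1)=I\cap Fix(\phi_1)$; hence the constraint ``$\bar\psi=\psi$ on $I\cup\phi_1(I)$'' decouples over the components. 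The gluing is then painless: an order-preserving bijection of $\R$ is automatically a homeomorphism, so as long as each local piece is an order-preserving bijection of $J$ onto $\psi(J)$ and we keep $\bar\psi=\psi$ on the closed set $Fix(\phi_1)$, the assembled map $\bar\psi$ is an order-preserving bijection of $\R$, hence lies in $Homeo_+(\R)$.

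Next I would carry out the per-interval construction. Fix a maximal interval of movement $J$, say with $\phi_1$ increasing there (the decreasing case is symmetric). Then $\phi_1|_J$ is fixed-point-free, hence conjugate to a unit translation, so the orbit of any point cuts $J$ into fundamental domains, and likewise $\phi_2$ on $\psi(J)$. I will build $\bar\psi|_J$ by prescribing it on one fundamental domain and extending by $\bar\psi(\phi_1^n(x))=\phi_2^n(\bar\psi(x))$, which automatically produces a conjugation of $\phi_1|_J$ to $\phi_2|_{\psi(J)}$. If $I\cap J=\varnothing$ there is nothing to respect on $J$ (by the first paragraph $\phi_1(I)\cap J=\varnothing$ as well), so I just take the equivariant extension of any order-preserving homeomorphism between matching fundamental domains, with matching endpoints; this maps $J$ onto $\psi(J)$.

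The substantive case is $I\cap J\neq\varnothing$; write $I\cap J=(c,d)$, an interval since it is an intersection of intervals. Strongness gives $\psi\phi_1=\phi_2\psi$ on $(c,d)$, so the prescribed values of $\psi$ on $(c,d)$ already satisfy the equivariance relation and determine, compatibly, its values on $\phi_1((c,d))$. The task is to extend $\psi|_{(c,d)}$ to a full equivariant order-preserving bijection of $J$ onto $\psi(J)$ without disturbing it on $(c,d)\cup\phi_1((c,d))$, and here the size of $(c,d)$ relative to a fundamental domain matters. If $(c,d)$ is at least as long as a fundamental domain it contains a full set of orbit representatives, and I take the equivariant extension of $\psi$ off such a domain, using the equivariance of $\psi$ on $(c,d)$ to check it agrees with $\psi$ on all of $(c,d)$ and, automatically, on $\phi_1((c,d))$. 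If $(c,d)$ is shorter than a fundamental domain, then $(c,d)$ and $\phi_1((c,d))$ sit disjointly inside one fundamental domain; I keep $\bar\psi=\psi$ there and fill the complementary gap by any order-preserving homeomorphism whose endpoint values are forced by $\psi$ and by equivariance, using that $\psi$ order-preserving leaves the required room in $\psi(J)$. Extending equivariantly completes $\bar\psi|_J$.

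Finally I would glue the pieces and verify the conclusions: the assembled $\bar\psi$ conjugates $\phi_1$ to $\phi_2$ on each $J$ by construction and on $Fix(\phi_1)$ because there $\bar\psi=\psi$ maps fixed points of $\phi_1$ to fixed points of $\phi_2$; it equals $\psi$ on $I$ and on $Fix(\phi_1)$, and on $I\cup\phi_1(I)$ by the decoupling above. I expect the only real subtlety to be the bookkeeping in the short-interval case together with the verification that the prescribed values are globally consistent with a single equivariant extension; every other point (gluing, continuity, membership in $Homeo_+(\R)$) reduces to the automatic continuity of monotone bijections of $\R$.
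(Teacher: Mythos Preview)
The paper does not actually prove this lemma here; immediately after the statement it says ``We refer the proof to \cite{abr}.'' So there is no in-paper argument to compare against. Your approach---reducing to each connected component $J$ of $\R\setminus Fix(\phi_1)$, using that $\phi_1|_J$ is conjugate to a translation, prescribing $\bar\psi$ on a fundamental domain, and extending equivariantly---is the standard one and is essentially what the cited reference does.

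Your outline is correct, but one sentence is inaccurate and worth fixing. In the short-interval case you assert that ``$(c,d)$ and $\phi_1((c,d))$ sit disjointly inside one fundamental domain.'' They are disjoint when $\phi_1(c)>d$, but $\phi_1((c,d))=(\phi_1(c),\phi_1(d))$ lies to the right of $\phi_1(c)$, so no single interval of the form $[x_0,\phi_1(x_0))$ contains both pieces. What actually makes the argument work is that strongness on $(c,d)$ forces $\psi(\phi_1(x))=\phi_2(\psi(x))$ there, so once you set $\bar\psi=\psi$ on $(c,d)$ the equivariant extension \emph{automatically} yields $\bar\psi=\psi$ on $\phi_1((c,d))$; you then only need to fill the gap $[d,\phi_1(c)]$ by any order-preserving homeomorphism onto $[\psi(d),\phi_2(\psi(c))]$ (nondegenerate since $\psi(d)<\psi(\phi_1(c))=\phi_2(\psi(c))$, using continuity at the endpoint) and extend by the conjugacy relation. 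You should also say a word about the boundary case where $c$ or $d$ is an endpoint of $J$ (hence a fixed point or $\pm\infty$): there just choose the base point for the fundamental domain far enough inside $I\cap J$ that its $\phi_1$-image also lies in $I\cap J$, which is possible since orbits accumulate on $\partial J$. With these small adjustments your argument is complete.
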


We refer the proof to \cite{abr}.

\begin{rem}\label{paintervalo} We will use Lemma \ref{promotion} in a slightly stronger form, where $\phi_1$ and $\phi_2$ are homeomorphisms of arbitrary intervals $I$ and $J$ respectively. The map $\psi:I\to J$ is a weak conjugation following a straightforward adaptation of Definition \ref{weakdefi}. This stronger version of Lemma \ref{promotion} is obtained as a corollary through conjugation. 
\end{rem}

We will also need to take square roots of homeomorphisms under composition.

\begin{lem}\label{raiz} Every $h\in Homeo_{+}(\R)$ has a square root.
\end{lem}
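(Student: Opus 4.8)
The plan is to build $g$ by working separately on $\Fix(h)$ and on the complementary open intervals, exploiting that $h$ restricted to each complementary interval is fixed-point-free.

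First I would set $g(x)=x$ for every $x\in\Fix(h)$; there $h$ is the identity, so $g^2=h$ holds automatically. The set $\R\setminus\Fix(h)$ is an at most countable disjoint union of open intervals $I=(a,b)$ (with $a,b\in\Fix(h)\cup\{\pm\infty\}$), each invariant under $h$, and on each such $I$ we have either $I\subseteq Inc(h)$ or $I\subseteq Decr(h)$. Thus it suffices to produce a square root of $h|_I$ that maps $I$ onto itself, and then paste all these pieces together with the identity on $\Fix(h)$.

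On a single interval $I$ on which $h$ has no fixed point, I would use the classical fact that a fixed-point-free orientation preserving homeomorphism of an interval is conjugate to a translation. Concretely, if $I\subseteq Inc(h)$, pick $x_0\in I$; the orbit $\{h^n(x_0)\}_{n\in\Z}$ is strictly increasing and exhausts $I$, so $[x_0,h(x_0)]$ is a fundamental domain. Choosing any increasing homeomorphism from $[x_0,h(x_0)]$ onto $[0,1]$ and extending it equivariantly by $\varphi(h^n(x))=\varphi(x)+n$ yields $\varphi\colon I\to\R$ with $\varphi h \varphi^{-1}=T_1$, where $T_s(x)=x+s$. Since $T_1=T_{1/2}^2$, I set $g|_I=\varphi^{-1}T_{1/2}\varphi$, which is an increasing homeomorphism of $I$ onto itself satisfying $(g|_I)^2=\varphi^{-1}T_1\varphi=h|_I$. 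The case $I\subseteq Decr(h)$ is identical with $T_{-1}$ and $T_{-1/2}$ (or follows by applying the above to $h^{-1}$ and inverting).

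It remains to check that the globally defined $g$ is a homeomorphism, and this is the only delicate point, since $\Fix(h)$ may be a complicated closed set whose components accumulate. Here I would avoid a direct $\varepsilon$--$\delta$ continuity argument and instead observe that $g$ is a strictly increasing bijection of $\R$: it maps $\Fix(h)$ bijectively onto itself and each complementary interval $I$ bijectively onto itself, and monotonicity across distinct pieces holds because the pieces are linearly ordered and $g$ preserves each one (for $x<y$ lying in different components one compares through a separating point of $\Fix(h)$). A strictly increasing bijection of $\R$ onto $\R$ is automatically continuous with continuous inverse, so $g\in Homeo_+(\R)$, and by construction $g^2=h$ on $\Fix(h)$ and on every complementary interval, hence on all of $\R$. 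The main obstacle is precisely this gluing step, and the monotone-bijection observation is what makes it painless.
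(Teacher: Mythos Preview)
Your proof is correct and follows essentially the same approach as the paper: fix the points of $\Fix(h)$, use conjugacy to a translation on each complementary interval to build the square root there, and glue. The paper's proof is much terser and simply asserts that the glued map is the desired square root, whereas you supply the extra justification (the strictly-increasing-bijection argument) that the glued map is indeed in $Homeo_+(\R)$; this is a genuine improvement in completeness but not a different method.
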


\begin{proof} Translations on $\R$ clearly have square roots. For $h\in Homeo_{+}(\R)$ define $\psi$ so that $Fix(\psi)=Fix(h)$, and if $I$ is a connected component of $\R-Fix(h)$ then $\psi|_{I}$ is a square root of $h|_{I}$, which exists because $h|_{I}$ is conjugated to a translation. It is easy to check that $\psi^2=h$. \end{proof}

\begin{rem} \label{mas raiz} In the proof above it is clear that if $q\in Fix(h)$ and $\psi_0$ is a square root of $h|_{(-\infty,q)}$, we can choose $\psi$ as an extension of $\psi_0$. We can also adapt Lemma \ref{raiz} for a homeomorphism $h:(-\infty,q_1]\to(-\infty,q_2]$, obtaining $\psi:(-\infty,q_3]\to(-\infty,q_4]$ with $\psi^2$ a restriction of $h$. This can be done by extending $h$ to a homeomorphism of $\R$ and then restricting $\psi$ to a suitable interval. 
\end{rem}

\section{Key technical tools}\label{troncocomun}


Let $w\in\F_n=\langle x_1,...,x_n\rangle$ be a reduced word, and write $w=a_m \cdots a_1$ with $a_j\in\{x_1^{\pm 1},...,x_n^{\pm 1}\}$. We define $w_0=e$ and $w_j=a_{j}...a_{1}$ for $0<j\leq m$.
If $\rho\in Rep(\F_n,Homeo_{+}(\R))$ and $x\in\R$ we will be interested in the sequence $$S(\rho,w,x)=(\rho(w_0)(x),...,\rho(w_m)(x))$$ that we call the {\em trajectory} of $x$ by $w$ under the action $\rho$. 

Trajectories will play a key role in our perturbation techniques. On one hand, we will construct new representations by ``extending'' pre-fixed trajectories. That is, we will first define some arbitrary, but suitable, sequence $S=(s_0,...,s_m)\in\R^{m+1}$ and then find a representation that realizes $S$ as a trajectory by $w$. 

On the other hand, we will need to perturb the representations while keeping in mind the effect on certain trajectories. To do that, for each generator $x_i$ we will need to look at the minimum point from which we can perturb the map $\rho(x_i)$ without changing the trajectory $S(\rho,w,x)$. That point is the largest one where we apply the generator $x_i$ in the trajectory. 

Taking into account that we will work with ``trajectories'' {\em before} realizing them by representations, it will be useful to make the relevant definitions in a combinatorial context, without reference to a specific representation.   

\begin{defi}\label{trajectory} Take $w\in\F_n=\langle x_1,...,x_n\rangle$ with $|w|=m$, a sequence $S=(s_0,...,s_m)\in\R^{m+1}$ and $i\in\{1,...,n\}$. Write $w=a_m \cdots a_1$ with $a_j\in\{x_1^{\pm 1},...,x_n^{\pm 1}\}$. We define

$$D_w(S,i)=\{s_j:a_{j+1}=x_i \ or \ a_{j}=x_i^{-1}\}$$
and
$$d_w(S,i) = \max D_w(S,i)$$ 
When $x_i$ does not appear in $w$, we set $D_w(S,i)=\emptyset$ and $d_w(S,i)=-\infty$. 
  
\end{defi}

We remark that when $S=S(\rho,w,x)$ is an actual trajectory, the set $D_w(S,i)$ is the subset of $S$ where we apply the generator $x_i$ in the trajectory. Look at Figure \ref{f1} for an example.



\begin{figure}[h!] 
\centering
\includegraphics[width=0.8\textwidth]{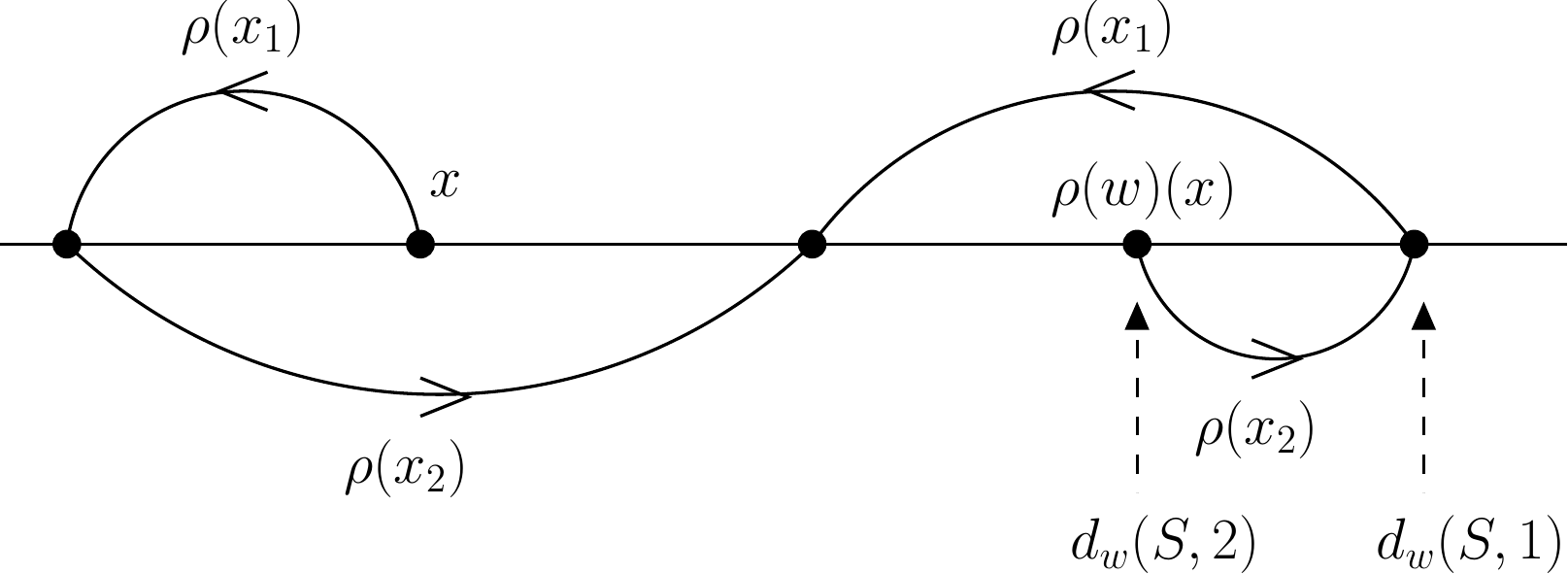}
\caption{This shows a possible example of a trajectory $S=S(\rho,w,x)$ for $w=x_2^{-1}x_1^{-1}x_2x_1$. In this depiction we can see the sets $D_w(S,i)$ as the starting points of the arrows marked with $\rho(x_i)$.}
\label{f1}
\end{figure}

We find it enlightening to think of a trajectory as a graph, as shown in Figure \ref{f1}. This can also be formulated for general sequences. With the notations of Definition \ref{trajectory}, we associate a {\em trajectory graph} to $S=(s_0,\ldots,s_m)$ as follows: The vertex set is just $\{s_0,\ldots,s_m\}$, and we put in an edge for every $j=1,\dots,m$ connecting $s_{j-1}$ to $s_j$. This edge is labeled and oriented according to $a_j$. Namely, if $a_j=x_{i}^{\epsilon}$ then the edge label is $x_i$, and its orientation depends on $\epsilon$: when $\epsilon=1$ it is oriented from $s_{j-1}$ towards $s_j$, and the reverse for $\epsilon=-1$.

Looking at the trajectory graph of $S$, we can regard $D_w(S,i)$ as the set of vertices which have an outgoing edge marked with $x_i$.   

The following observation will be useful in the proof of most of the heavier results. It comes up naturally by looking at trajectory graphs.

\begin{rem}\label{inverso} For any $\rho\in Rep(\F_n,Homeo_{+}(\R))$, $w\in\F_n$ and $p\in\R$ we have that $S(\rho,w^{-1},\rho(w)(p))$ is just $S(\rho,w,p)$ traversed backwards. Thus $$D_w(S(\rho,w,p),i)=D_{w^{-1}}(S(\rho,w^{-1},\rho(w)(p)),i)$$ for all $i=1,\ldots,n$.
\end{rem}
 
Our first result is the surjectivity of the word map. Its proof will illustrate the technique of defining a representation by pre-fixing some trajectories.

\begin{lem}\label{tronco2} Let $w\in\F_n$ and $g\in Homeo_{+}(\R)$. Then  $$V_{w}(g) = \{\rho\in\text{Rep}(\F_n,Homeo_{+}(\R)): \rho(w)=g \}$$ is non empty.
\end{lem}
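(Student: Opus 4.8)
The plan is to build the generators $\rho(x_1),\dots,\rho(x_n)$ by prescribing a trajectory (indeed, a family of trajectories) and reading off the required values of each generator from it, as advertised before the statement. Write $w=a_m\cdots a_1$ with $a_j=x_{i(j)}^{\epsilon_j}$. If I prescribe a sequence $S=(s_0,\dots,s_m)$ and ask for a representation realizing it as $S(\rho,w,s_0)$, then each letter forces one value of a generator: $a_j=x_i$ forces $\rho(x_i)(s_{j-1})=s_j$, and $a_j=x_i^{-1}$ forces $\rho(x_i)(s_j)=s_{j-1}$. Thus $\rho(x_i)$ is constrained exactly at the points of $D_w(S,i)$, and such a $\rho(x_i)\in Homeo_+(\R)$ exists if and only if the partial assignment it must satisfy is strictly increasing, since any strictly increasing partial map on a finite set extends to an orientation preserving homeomorphism. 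So realizability of a single trajectory reduces to a purely combinatorial order condition on the numbers $s_j$, and choosing $s_0=p$, $s_m=g(p)$ yields $\rho(w)(p)=g(p)$.

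The difficulty is that one trajectory controls $\rho(w)$ only at the single point $p$, whereas I need $\rho(w)=g$ on all of $\R$. I would first dispose of the easy reductions. If $g=\mathrm{id}$ the trivial representation works. The problem is invariant under conjugation (if $\rho'(w)=g'$ and $g=\psi g'\psi^{-1}$ then $\psi\rho'(\cdot)\psi^{-1}$ solves it for $g$), and it splits over $\Fix(g)$: asking every generator to fix $\Fix(g)$ pointwise and to be supported on its complementary intervals, it suffices to solve the problem on each such interval, where $g$ is fixed point free. Hence I may assume $g$ has no fixed points, so after conjugation $g$ is the unit translation $T$. I would also record two shortcuts: if some generator occurs exactly once in $w$, writing $w=u\,x_i^{\pm1}\,v$ and setting every other generator to the identity gives $\rho(w)=\rho(x_i)^{\pm1}$, solved by $\rho(x_i)=g^{\pm1}$; and if $w$ involves a single generator, $w=x_i^{k}$, then I only need a $k$-th root of $g$, which exists by the component wise argument of Lemma \ref{raiz}.

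The core case left is a \emph{balanced} word in which several generators each occur repeatedly, the model being the commutator $w=x_1x_2x_1^{-1}x_2^{-1}$; here setting generators to the identity destroys $w$, and (as the abelianization shows) translations alone cannot achieve $\rho(w)=T$ when every generator has exponent sum $0$, since then $\rho(w)$ would be the translation by $\sum_i e_i\tau_i=0\neq 1$. To force the full equality $\rho(w)=T$ I would realize trajectories not of a single point but of an entire fundamental domain $[0,1]$ of $T$ at once: prescribe for each $x\in[0,1]$ a trajectory $S(x)=(s_0(x),\dots,s_m(x))$ with $s_0(x)=x$ and $s_m(x)=x+1$, depending continuously and increasingly on $x$, and then define the generators $T$-equivariantly so that the assignment on $[0,1]$ propagates to all of $\R$. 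By construction $\rho(w)(x)=x+1$ for every $x$, that is $\rho(w)=T$.

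The main obstacle is precisely this last step: ensuring that, as $x$ sweeps the fundamental domain, the constraints coming from the different occurrences of a repeated generator $x_i$ (the points of $D_w(S(x),i)$) assemble into one globally well defined increasing homeomorphism $\rho(x_i)$, rather than merely being consistent at each individual $x$. Indeed a naive affine choice $s_j(x)=x+c_j$ forces each generator to be a translation and leads to a contradiction in the commutator case, so the curves must be genuinely non-affine. This is where the combinatorial bookkeeping around $d_w(S,i)$ enters: I would arrange the curves $x\mapsto s_j(x)$ so that, for each fixed generator, the family of prescribed source intervals and target intervals nests into a single strictly increasing graph. The pointwise order condition from the first paragraph guarantees feasibility for each separate $x$; the real work, and the technical heart of the argument, is to thicken this to an interval of base points while keeping each generator single valued and monotone.
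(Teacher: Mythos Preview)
Your reductions (splitting over $\Fix(g)$, reducing to the fixed-point-free case, passing to a conjugate) match the paper's, and your diagnosis of the core difficulty is accurate. But the proposal stops precisely where the work begins: the last paragraph names the obstacle---making the continuous family $S(x)$ assemble into single-valued increasing maps $\rho(x_i)$---and then says ``the real work is to thicken this'', without doing it. You have not specified any family of curves that avoids the collision you yourself flag in the commutator example, so this is a genuine gap, not a sketch of a complete argument.

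The paper sidesteps the difficulty with two moves you are missing. First, it weakens the target: rather than hitting $\rho_0(w)=T$ on the nose, it suffices to build $\rho_0$ with $\rho_0(w)(x)>x$ for all $x$, since any two fixed-point-free homeomorphisms of the same sign are conjugate. Second, with this weaker goal a \emph{discrete} family of trajectories is enough: after replacing $w$ by a cyclically reduced conjugate of length $m$, set $S_r=(mr,mr+1,\ldots,m(r+1))$ for $r\in\Z$. The partial maps $g_i$ read off from these integer trajectories are checked to be increasing by a short combinatorial argument (the only delicate point being where $S_r$ meets $S_{r+1}$, which is exactly where cyclic reducedness is used), and then extended by linear interpolation. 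Since $\rho_0(w^r)(0)=mr\to\pm\infty$, the map $\rho_0(w)$ has no fixed points, and a conjugation finishes. This discrete-plus-conjugation route avoids entirely the continuous coherence problem that stalls your approach.
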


\begin{proof}

We will prove it for the case $Fix(g)=\emptyset$, the general case reduces to this by setting $Fix(g)$ as global fixed points of $\rho$. We can further assume that $g(x)>x$ for every $x\in\R$, the other case being analogous.

 It suffices to find a representation $\rho_0$ such that $\rho_0(w)(x)>x$ for all $x\in\R$, since then we can conjugate $\rho_0$ to obtain a representation $\rho$ with $\rho(w)=g$. The same reasoning allows us to exchange $w$ by one of its conjugates in $\F_n$, so we may assume that $w$ is cyclically reduced.

Let $m=|w|$ and set $S_r = (mr,mr+1,...,m(r+1))$ for all $r\in \Z$. In other words, $S_r=(s_{r,0},\ldots,s_{r,m})$ is the sequence defined by $s_{r,j}=mr+j$. Notice that $S_r$ ends where $S_{r+1}$ begins. We will give a representation $\rho_0$ that has all $S_r$ as trajectories by $w$. This is enough, since then we shall have that $\rho_0(w^r)(0)=mr \to \pm\infty$ as $n\to\pm\infty$, proving that $\rho_0(w)(x)>x$ for all $x\in\R$.

Fix some $r\in\Z$. For each $i\in\{1,...,n\}$, we define a map $g_i$ on $D_w(S_r,i)$ by the formula $g_i(s_{r,j-1})=s_{r,j}$ if $a_j=x_i$ and $g_i(s_{r,j})=s_{r,j-1}$ if $a_j=x_i^{-1}$. These are indeed well defined maps, and are also injective, due to the fact that $S_r$ has no repetitions and $w$ is reduced (admits no cancellations). This can be seen by considering the trajectory graph for $S_r$: The formula for $g_i$ is just defined by the arrows (oriented edges) marked by $x_i$, and we notice that no vertex admits two incoming or two outgoing edges of the same label. 

Assuming that $w$ is cyclically reduced allows us to join, for fixed $i\in\{1,...,n\}$, all the maps $g_i$ corresponding to every $r\in\Z$ (we abuse notation by omitting $r$ from it). This gives rise to a well defined map $g_i$ on $\bigcup_{r\in\Z}D_w(S_r,i)$, as can be shown by the same argument with the trajectory graphs, where the fact that $w$ is cyclically reduced is used for the common vertices of different sequences, namely $s_{r,m}=s_{r+1,0}$. 

We will show that $g_i:\bigcup_{r\in\Z}D_w(S_r,i)\to\R$ are increasing (i.e. $g_i(x)<g_i(y)$ for $x<y$), and therefore can be extended to $\R$ as homeomorphisms (e.g. linear interpolation). This defines the maps $\rho_0(x_i)$ that give the desired representation $\rho_0$. Notice that if a generator $x_i$ is not present in $w$, we can choose $\rho_0(x_i)$ freely.

Now we see that $g_i$ is increasing. Take $x<y$ in $X_i = \bigcup_{r\in\Z}D_w(S_r,i)$. The construction of the sequences $S_r$ gives that $g_i(x)=x+\epsilon(x)$ for $x\in X_i$ and $\epsilon(x)=\pm 1$. This shows $g_i(x)<g_i(y)$ directly for $y-x\geq 3$, and using injectivity for $y-x=2$. Also notice that the union of the trajectory graphs for $S_r$ admits no closed edge-paths. So if both $x$ and $x+1$ belong in $X_i$ then we cannot have $\epsilon(x)>\epsilon(x+1)$, which gives $g_i(x)<g_i(y)$ for $y-x=1$.




\end{proof}











The following is the main technical lemma. It refines Lemma 3.1 in \cite{abr}. As we mentioned earlier, its aim is to give a perturbation of a representation of $\F_n$ in $Homeo_{+}(\R)$ that achieves many desired properties: having non trivial stabilizer in the orbit of a given point, preserving some given trajectories of the original action, and controlling the dynamics near $+\infty$ of a specified element $w\in\F_n$.

\begin{lem}\label{tronco} Let $\rho\in Rep(\F_n,Homeo_{+}(\R))$, $p,p_1,\ldots,p_k\in\R$ and $w,v_1,\ldots,v_k\in\F_n$, with $w$ cyclically reduced. Let $$d_{ij} = d_{v_j}(S(\rho,v_j,p_j),i) \text{ for } i=1,\ldots,n \text{ and } j=1,\ldots,k$$ and $$d_i=d_w(S(\rho,w,p),i) \text{ for }  i=1,\ldots,n$$
Assume that $\rho(w)(p)\neq p$, and that there exists $i_0$ with $d_{i_0}\geq \max_j d_{i_0j}$.

\vs
 
 Then there exists $\rho'\in Rep(\F_n,Homeo_{+}(\R))$ such that:
\begin{enumerate}
\item $\rho'(x_i)$ agrees with $\rho(x_i)$ on $(-\infty,m_i]$ where $m_i=\max\{d_i,d_{i1},\ldots,d_{ik}\}$, for each $i=1,\ldots,n$.
\item $S(\rho',v_j,p_j)=S(\rho,v_j,p_j)$ for every $j=1,\ldots,k$.
\item $\rho'(w)$ agrees with $\rho(w)$ on $(-\infty,p]$ and $\rho'(w)(x)\neq x$ for $x\geq p$.
\item $p$ has non trivial stabilizer under $\rho'$.
\end{enumerate}
\end{lem}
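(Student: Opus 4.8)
The goal is to perturb $\rho$ to $\rho'$ so that: the perturbation only happens "to the right" of the controlled initial segments (item 1), the $k$ prescribed trajectories are untouched (item 2), $w$ acts as before on $(-\infty,p]$ but develops no fixed point to the right of $p$ (item 3), and yet $p$ picks up a nontrivial stabilizer (item 4). The natural idea is to leave each $\rho(x_i)$ unchanged on $(-\infty,m_i]$ and only redefine it past $m_i$, choosing the redefinition so as to force some nontrivial reduced word $u\in\F_n$ to fix $p$, while arranging that this can be done compatibly with keeping $\rho(w)$ fixed-point-free for $x\ge p$.

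\medskip

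\textbf{Step 1: choose the stabilizing word and the point it must fix.} I would first produce a candidate element of the would-be stabilizer. The distinguished index $i_0$ with $d_{i_0}\ge\max_j d_{i_0 j}$ is the crucial hypothesis: it says that among the controlled data, the largest point at which the generator $x_{i_0}$ is "read" belongs to the trajectory of $w$ at $p$, not to any $v_j$-trajectory. So the point $d_{i_0}=d_w(S(\rho,w,p),i_0)$ sits at or beyond all the $d_{i_0 j}$, which means I have freedom to alter $\rho(x_{i_0})$ at $d_{i_0}$ without disturbing the $v_j$-trajectories, provided $d_{i_0}$ is strictly past (or equal to) those points. I expect to use this to install a small "bump" in $\rho(x_{i_0})$ located at the point of $S(\rho,w,p)$ immediately following/preceding the last $x_{i_0}$-application, so that the next iterate of $w$ returns exactly to $p$, i.e. $\rho'(w)(\text{some point})$ closes up into a periodic orbit through $p$. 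Concretely I would aim to make $\rho'(w)^N(p)=p$ for a suitable $N$, or more directly force $\rho'(w)(q)=p$ where $q=\rho(w)(p)$, giving $w^{?}$ — actually the clean target is to make $p$ a periodic point of $\rho'(w)$, so that a power of $w$ stabilizes $p$.

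\medskip

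\textbf{Step 2: realize the modified trajectory by a representation.} Having decided on a finite modified sequence $S'$ (the old trajectory $S(\rho,w,p)$ with its tail bent so that applying $w$ once more returns to $p$), I would invoke the trajectory-realization technique from the proof of Lemma \ref{tronco2}: define each $\rho'(x_i)$ on the relevant finite point-set by the edge-maps $g_i$ dictated by $S'$, check these $g_i$ are increasing (using that $w$ is cyclically reduced and $S'$ has the right combinatorics to avoid closed edge-paths and label collisions), and glue them to $\rho(x_i)$ on $(-\infty,m_i]$ by monotone interpolation, extending to homeomorphisms of $\R$. Remark \ref{inverso} and the trajectory-graph bookkeeping are the tools for verifying that the modification is confined to $(m_i,\infty)$ and leaves $D_{v_j}$-data intact, yielding items 1 and 2. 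For item 3, I keep $\rho'(w)=\rho(w)$ on $(-\infty,p]$ automatically (since the modification starts only at points of $S(\rho,w,p)$ at or beyond the $m_i$), and I must separately ensure $\rho'(w)(x)\ne x$ for $x\ge p$; since $p$ itself becomes periodic rather than fixed, I can push the "closing-up" far enough to the right that $w$ stays strictly monotone on $[p,+\infty)$ except at the periodic points, and then smooth so that $\rho'(w)$ has no actual fixed point there.

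\medskip

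\textbf{Main obstacle.} The hard part will be Step 2's compatibility: I must force a new coincidence (the periodic orbit through $p$) by editing only the generators past their thresholds $m_i$, while simultaneously guaranteeing (i) the edit does not create a cancellation or a forbidden repeated/closed edge-path that would make some $g_i$ fail to be increasing, and (ii) it does not accidentally inject a fixed point of $\rho'(w)$ into $(p,\infty)$ — I want $p$ periodic, not fixed, and I want $w$ otherwise fixed-point-free there, which is delicate because bending the trajectory to return to $p$ tends to create a fixed point of some power of $w$ and could create one of $w$ itself if done carelessly. The role of the hypothesis $d_{i_0}\ge\max_j d_{i_0 j}$ is exactly to give one generator whose last relevant value is unconstrained by the $v_j$'s, so there is "room" to perform this bend; the combinatorial heart of the proof is checking, via the trajectory graph of the enlarged sequence, that a single such bend suffices and stays monotone. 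I expect the cleanest route is to take $\rho'(w)$ to have $p$ as a periodic point of some period $N\ge 2$ (so a nontrivial power of $w$ stabilizes $p$, giving item 4) while keeping $\rho'(w)$ itself fixed-point-free on $[p,\infty)$, and to verify item 3 by controlling the monotonicity of the interpolation on the finitely many new edges.
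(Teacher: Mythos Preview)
Your proposal contains a genuine and fatal gap. You aim to arrange that $p$ is a periodic point of $\rho'(w)$ of some period $N\ge 2$, so that $w^N$ provides the nontrivial stabilizer, while simultaneously keeping $\rho'(w)$ fixed-point-free on $[p,\infty)$. But an orientation-preserving homeomorphism of $\R$ has no periodic points other than fixed points: if $\rho'(w)(p)>p$ then monotonicity gives $\rho'(w)^N(p)>p$ for every $N\ge 1$, and similarly if $\rho'(w)(p)<p$. Thus $\rho'(w)^N(p)=p$ forces $\rho'(w)(p)=p$, which directly contradicts item 3. In other words, the stabilizing element \emph{cannot} be a power of $w$; your ``clean target'' is incompatible with the conclusion you must reach.

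The paper's proof resolves this by choosing the stabilizing word to be something entirely unrelated to powers of $w$. After laying down trajectories $S_r$ (for $r\ge 1$, starting at $\rho(w)(p)$) that escape to $+\infty$ --- which is what guarantees $\rho'(w)$ is fixed-point-free on $[p,\infty)$ --- one inserts a single \emph{extra} edge into the trajectory graph, labeled by a generator $x_{i_1}$ that does \emph{not} match the letter of $w$ at that position. This extra edge joins two adjacent vertices $s_{2,j_1}$ and $s_{2,j_1+1}$ of $S_2$, producing a short word (of length $2$, e.g.\ $x_{i_1}^{\pm 1}a_{j_1+1}$) that fixes a point in the $\rho'$-orbit of $p$. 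The delicate combinatorics --- and the reason for the special treatment of the commutator case $w=[x_1,x_2]^u$ --- is precisely ensuring that such an insertion is possible without destroying injectivity or monotonicity of the partial maps $g_i$. Your Step 2 machinery (trajectory realization, monotone interpolation) is the right toolkit, but it must be aimed at producing this kind of ``sideways'' stabilizer, not a periodic orbit of $w$.
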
 

\begin{rem} In the statement of Lemma \ref{tronco}, point 2 and the first part of point 3 can be deduced from point 1. This is straightforward from the definitions. Nevertheless, we state these consequences explicitly as they are important in the applications.
\end{rem}

The hypothesis of Lemma \ref{tronco} is a bit involved. The following gives an easier way to verify it, that will suffice in most cases.

\begin{lem}\label{increasing} With the notations of Lemma \ref{tronco}, if $p\geq m = \max\{S(\rho,v_j,p_j):j=1,\ldots,k\}$ then there exists $i_0$ with $d_{i_0}\geq \max_j d_{i_0j}$.
\end{lem}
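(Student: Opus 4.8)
The plan is to show that one may always take $i_0$ to be the index of the \emph{first letter} of $w$. Write $w=a_m\cdots a_1$ and $S(\rho,w,p)=(s_0,\ldots,s_m)$, so that $s_0=p$ and $s_1=\rho(a_1)(p)$; since the standing hypothesis $\rho(w)(p)\neq p$ forces $w\neq e$, the letter $a_1$ exists, say $a_1=x_{i_0}^{\pm 1}$. Before splitting into cases I would record two elementary facts. First, $d_{ij}\le m$ for all $i,j$, because $d_{ij}$ is by definition one of the entries of the trajectory $S(\rho,v_j,p_j)$, while $m$ is the largest entry occurring in any of these trajectories. Second, a combinatorial observation read directly from Definition \ref{trajectory}: if a vertex $u=s_l$ of a trajectory $S(\rho,v,q)$ carries an outgoing $x_i$-edge, i.e. $u\in D_v(S(\rho,v,q),i)$, then $\rho(x_i)(u)$ is again a vertex of that same trajectory. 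Indeed $u\in D_v(S,i)$ means either $a_{l+1}=x_i$, whence $\rho(x_i)(u)=s_{l+1}$, or $a_l=x_i^{-1}$, whence $\rho(x_i)(u)=s_{l-1}$.

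If $a_1=x_{i_0}$ (positive first letter) the conclusion is immediate: then $s_0=p$ lies in $D_w(S(\rho,w,p),i_0)$, so $d_{i_0}\ge p\ge m$, and combined with the first fact this gives $d_{i_0}\ge m\ge d_{i_0 j}$ for every $j$, hence $d_{i_0}\ge\max_j d_{i_0 j}$.

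The case $a_1=x_{i_0}^{-1}$ is the crux, and it is here that monotonicity of line homeomorphisms enters. Now it is $s_1=\rho(x_{i_0}^{-1})(p)$ that belongs to $D_w(S(\rho,w,p),i_0)$, so $d_{i_0}\ge s_1=\rho(x_{i_0}^{-1})(p)$. Fix any $j$ with $D_{v_j}(S(\rho,v_j,p_j),i_0)\neq\emptyset$, and let $t=d_{i_0 j}$ be the largest vertex of $S(\rho,v_j,p_j)$ carrying an outgoing $x_{i_0}$-edge. By the combinatorial observation $\rho(x_{i_0})(t)$ is again a vertex of that trajectory, so $\rho(x_{i_0})(t)\le m\le p$; applying the increasing map $\rho(x_{i_0}^{-1})$ then gives $t\le\rho(x_{i_0}^{-1})(p)=s_1\le d_{i_0}$. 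For the remaining $j$ one has $d_{i_0 j}=-\infty$, so taking the maximum over all $j$ yields $\max_j d_{i_0 j}\le d_{i_0}$, as desired.

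I expect the main obstacle to be conceptual rather than computational, and to lie exactly in this last case. One cannot in general find an index with $d_{i_0}\ge m$: the base point $p$ may be a strict maximum of the $w$-trajectory carrying no outgoing edge, so a naive comparison against $m$ fails. The right move is to compare $t$ with $s_1$ only \emph{after} applying $\rho(x_{i_0})$, exploiting that this increasing map sends the outgoing-edge endpoints of the $v_j$-trajectories back into those trajectories, hence below $m$. I would also note that neither fact used that $w$ is cyclically reduced, nor the distinctness of trajectory points, so the degenerate situations (a generator absent from some $v_j$, giving $d_{i_0 j}=-\infty$, or repeated values along a trajectory) need no separate treatment.
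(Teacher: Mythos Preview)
Your proof is correct and follows essentially the same approach as the paper's: both pick $i_0$ from the first letter $a_1$ of $w$, dispose of the case $a_1=x_{i_0}$ immediately via $p\ge m$, and in the case $a_1=x_{i_0}^{-1}$ use the key observation that $\rho(x_{i_0})(d_{i_0 j})$ lies in the trajectory $S(\rho,v_j,p_j)$ and hence is bounded by $m$. The only cosmetic difference is that the paper phrases the second case as a contradiction while you argue directly, and you are slightly more explicit about the degenerate case $d_{i_0 j}=-\infty$.
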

\begin{proof} Write $w=a_m\cdots a_1$ and $a_1 = x_{i_0}^{\epsilon}$. We claim this $i_0$ works. 

According to the sign of $\epsilon$, either $p$ or $\rho(a_1)(p)$ belongs to $D_w(S(\rho,w,p),i_0)$. If $p$ does (when $\epsilon =1$), then $d_{i_0}\geq p \geq m \geq  d_{i_0j}$ for every $j$. 

Otherwise, $\epsilon =-1$ and $\rho(a_1)(p)\leq d_{i_0}$. If the assertion were not true, then $\rho(a_1)(p)\leq d_{i_0}<  d_{i_0j}\leq m \leq p$ for some $j$. But then we should have that $\rho(x_{i_0})(d_{i_0j})> p$. This is a contradiction since $\rho(x_{i_0})( d_{i_0j})$ belongs to $S(\rho,v_j,p_j)$, so it should be less than $m$. 

\end{proof}

The proof of Lemma \ref{tronco} gets very technical. We point out that Theorems \ref{manija}, \ref{hnn} and \ref{positiveword} can be derived from the statement of \ref{tronco} without using facts that come up in its proof. 
\vs

{\bf \em Proof of Lemma \ref{tronco}:} For simplicity, we will assume that $\rho(w)(p)>p$. 
Otherwise we exchange $w$ for $w^{-1}$ and $p$ for $\rho(w)(p)$. This can be done without altering the hypotheses by Remark \ref{inverso}. 

The strategy of the proof follows the same idea we used for Lemma \ref{tronco2}. We will first define some suitable sequences $S_r$ for $r=1,2,...$ that will become the trajectories $S(\rho',w,\rho'(w^r)(p))$ for the desired representation $\rho'$. Those sequences will be used to define the maps $\rho'(x_i)$ in some discrete sets, and then it will be possible to extend them by interpolation. Choosing the $S_r$ carefully will allow us to achieve the objectives in the statement: make these extensions possible with $\rho'(x_i)$ and $\rho(x_i)$ agreeing on $(-\infty,m_i]$,  make $\rho'(w^r)(p)$ tend to $+\infty$ (thus $\rho'(w)$ will have no fixed points after $p$), and also make $\rho'(z)(p)=p$ for some non trivial $z\in\F_n$ (non trivial stabilizer).

Write $w=a_m\cdots a_1$, with $m=|w|$. Recall from the statement that $m_i=\max\{d_i,d_{i1},\ldots,d_{ik}\}$, and let $M=\max S(\rho,w,p)\cup\bigcup_{j=1}^k S(\rho,v_j,p_j)$.

\vs

{\flushleft \bf Step 1:} Definition of $S_1$.
\vs

We define $S_1 = (s_{1,0},\ldots,s_{1,m})$ as follows:

Set $s_{1,0}=\rho(w)(p)$, and for $0<j\leq m$, let $$ s_{1,j}= \rho(a_j)(s_{1,j-1}) \text{ while } \left\{ \begin{array}{ccc} s_{1,j-1} \leq m_i & \text{ if } & a_j=x_i, \text{ or } \\ s_{1,j-1} \leq \rho(x_i)(m_i) & \text{ if } & a_j=x_i^{-1}. \end{array} \right. $$ We get to $s_{1,l}$, the last element we can define by that process. Then we choose $s_{1,l+1}>M$, and set $s_{1,j+1}=s_{1,j}+1$ for every $j= l+1,\ldots,m$.

The above definition of the $s_{1,j}$ amounts to say that $S_1$ agrees with $S(\rho,w,\rho(w)(p))$ for as long as $s_{1,j}$ can be defined using the $\rho(x_i)$ restricted to $(-\infty,m_i]$. When that is no longer possible, we have freedom to pick the next $s_{1,j}$ without contradicting point 1 in the statement. We will do so to help us achieve point 3, in a similar fashion as in Lemma \ref{tronco2}. Figure \ref{f2} gives an example of this process. The next claim says that the last part of this definition really takes place.

\vs

{\bf Claim:} $l<m$.  
\vs

\begin{proof} Recall the hypothesis that there is some $i_0$ with $d_{i_0}\geq \max_j d_{i_0j}$. Also recall this means $d_{i_0}=m_{i_0}$. Notice that $d_w(S(\rho,w,x),i)$ is increasing in $x$, as it is a maximum of increasing homeomorphisms. Since we are assuming $\rho(w)(p)>p$, this gives us that $$ d_w(S(\rho,w,\rho(w)(p)),i_0) > d_w(S(\rho,w,p),i_0) = d_{i_0} = m_{i_0}$$ 
On the other hand, if $l=m$ we would have that $S_1=S(\rho,w,\rho(w)(p))$ and every point in $D_w(S(\rho,w,\rho(w)(p)),i)$ would be less than $m_i$ for every $i$, for that is what is needed for the ``while'' condition to hold through $j=1,\ldots,m$. This contradicts what we just obtained for $i_0$.

\end{proof}

\begin{figure}[h!] 
\includegraphics[width=1\textwidth]{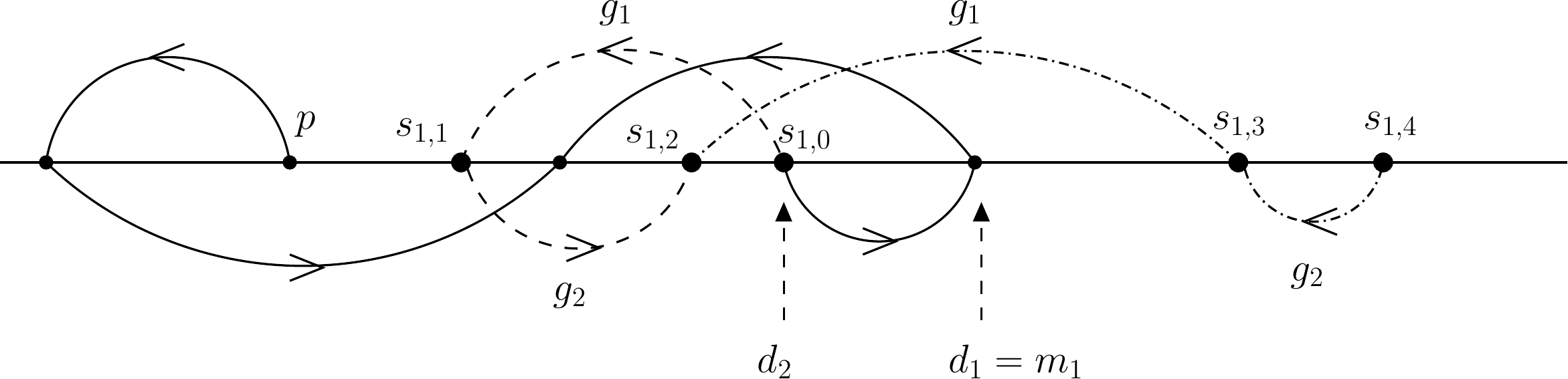}
\caption{Step 1: we show the construction of the sequence $S_1$ for the example in Figure \ref{f1}, assuming that $d_1=m_1$ (the sequences $S(\rho,v_j,p_j)$ are not drawn in the picture). Here $l=2$. Step 3: we draw the arrows defining the $g_i$ on $D_w(S_1,i)$.}
\label{f2}
\end{figure}

\vs
{\flushleft \bf Step 2:} Definition of $S_r$ for $r\geq 2$.
\vs


The idea is to make a suitable $S_2$ that will help us create a non trivial stabilizer, and define $S_r$ for $r\geq 3$ in a similar way as in Lemma \ref{tronco2}, in order to obtain point 3 in the statement.  We remark that $S_r$ must begin where $S_{r-1}$ ends.
We need to distinguish two cases according to the form of $w$. They will only differ in $S_2$ and possibly in $S_3$.

\begin{itemize}
\item[A:] Suppose $n=2$ and $w=[x_1,x_2]^u$, where $[x_1,x_2]=x_2^{-1}x_1^{-1}x_2x_1$ and $m=4u$. This case applies for the other commutators of $x_1$ and $x_2$ as well, possibly exchanging the generators, or replacing them by their inverses. 

We define $S_2=(s_{2,0},\ldots,s_{2,4u})$ as  $s_{2,0}=s_{1,m}$, $s_{2,1}=s_{1,m}+3$, $s_{2,2}=s_{1,m}+4$, $s_{2,3}=s_{1,m}+1$, $s_{2,4}=s_{1,m}+2$, and if $u>1$ we set $s_{2,5}=s_{1,m}+5$ and $s_{2,j+1}=s_{2,j}+1$ for $j\in \{5,\ldots,4u-1\}$. 

For $r\geq 3$ we define $S_r=(s_{r,0},\ldots,s_{r,4u})$ by $s_{r,0}=s_{r-1,4u}$ and $s_{r,j+1}=s_{r,j}+1$ for $j\in\{0,\ldots,4u-1\}$, with the exception of $s_{3,1}=s_{3,0}+3$ if $u=1$. (Notice that by this exception, the trajectory graph defined on $\bigcup_{r\geq 2}S_r$ looks the same for any $u$, and avoids two different incoming edges at $s_{2,1}$ both marked with $x_1$). 
\item[B:] For any other $w$ we set $S_r= (s_{r,0},\ldots,s_{r,m})$ with $s_{r,0}=s_{r-1,m}$ and $s_{r,j+1} = s_{r,j}+1$ for every $r\geq 2$ and $j\in\{0,\ldots,m-1\}$. 
\end{itemize}

\begin{figure}[h!] 
\centering
\includegraphics[width=0.8\textwidth]{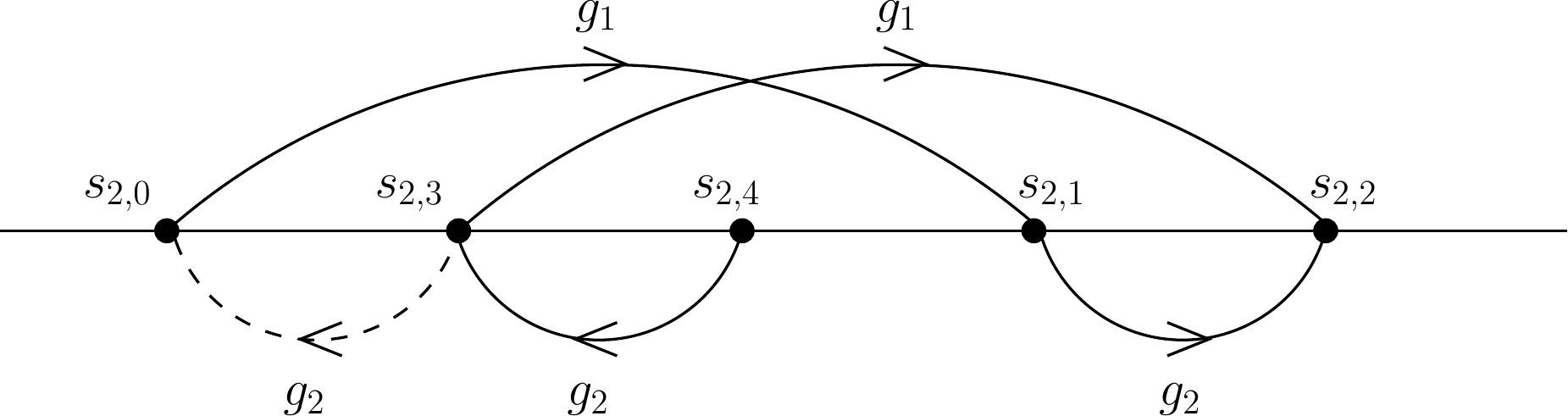}
\caption{Case A. Step 2: we draw the first four points of the sequence $S_2$. Step 3: we draw the arrows corresponding to the maps $g_i$.}
\label{f3}
\end{figure}

\vs
{\flushleft \bf Step 3:} Partial definition of the maps $\rho'(x_i)$.
\vs

The sequences $S_r$ define maps $g_i$ on the sets $D_w(S_r,i)$ as in Lemma \ref{tronco2}, by taking $g_i(s_{r,j-1})=s_{r,j}$ if $a_j=x_i$ and $g_i(s_{r,j})=s_{r,j-1}$ if $a_j=x_i^{-1}$. We can set $g_i$ to agree with $\rho(x_i)$ on $(-\infty,m_i]$ and we obtain well defined maps $g_i:(-\infty,m_i]\cup\bigcup_r D_w(S_r,i)\to\R$, that are also injective. (This works by the same arguments used in Lemma \ref{tronco2}. Here we use that $w$ is cyclically reduced).

In order to achieve the non trivial stabilizer, we make a further extension of some of the $g_i$, that will be different on each case. 
  
\begin{itemize}
\item In Case A we set $g_2(s_{2,3})=s_{2,0}$. (See Figure \ref{f3}). Notice there is no arrow coming from $s_{2,3}$ and marked by $x_2$ in the trajectory graphs, thus $g_2$ is well defined. Injectivity is given by the fact that there is no incoming edge marked by $x_2$ at $s_{1,m}=s_{2,0}$, by construction of $S_1$ and the form of $w=[x_1,x_2]^u$. (There is an outgoing edge marked by $x_2$ at $s_{2,0}$, but that is compatible).
\item In Case B there is some $j_1\in\{1,\ldots,m\}$ and some $i_1\in\{1,\ldots,n\}$ such that $a_{j_1+1} \neq x_{i_1}^{\pm 1} $, and we do not have both $a_{j_1}=x_{i_1}^{\epsilon}$ and $a_{j_1+2}= x_{i_1}^{-\epsilon}$, where the indexes are taken mod $m=|w|$. Otherwise we would be in case A.   

If neither $a_{j_1}$ nor $a_{j_1+2}$ is $x_{i_1}^{\pm 1}$, we can set $g_{i_1}(s_{2,j_1})=s_{2,j_1+1}$ without contradicting the values of $g_{i_1}$ we had defined previously. This is also true if $a_{j_1} = x_{i_1}$ or $a_{j_1+2}= x_{i_1}$.   
If $a_{j_1}=x_{i_1}^{-1}$ or $a_{j_1+2}=x_{i_1}^{-1}$, then we cannot do that, but we can put $g_{i_1}(s_{2,j_1+1})=s_{2,j_1}$. (See Figure \ref{f4}).

\end{itemize}

\begin{figure}[h!] 
\centering
\includegraphics[width=0.8\textwidth]{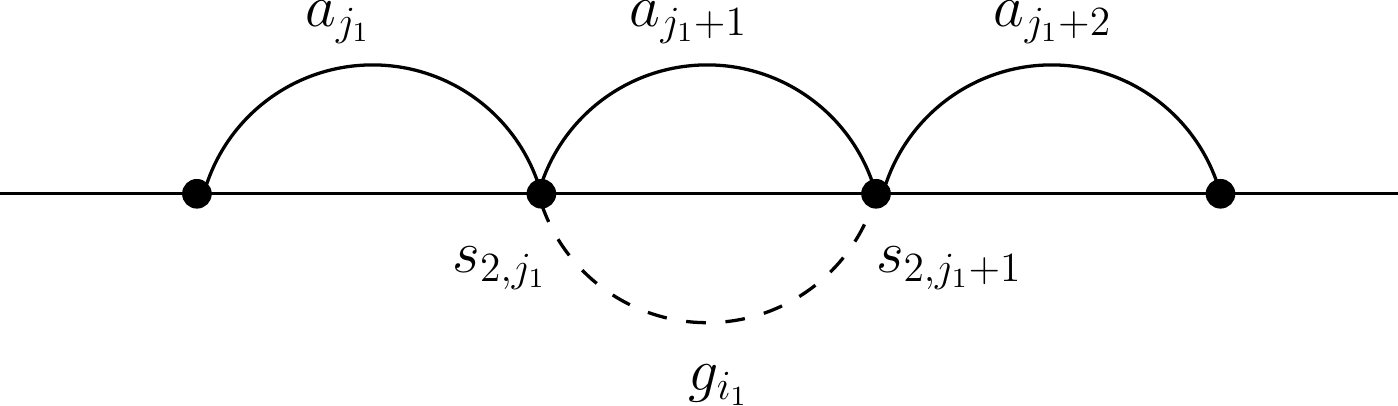}
\caption{Case B. We have to pick an orientation for the dotted line, ensuring that there is no vertex with two incoming or outgoing arrows marked with $g_{i_1}$.}
\label{f4}
\end{figure}

\vs
{\flushleft \bf Step 4:} Extension of the maps $\rho'(x_i)$.
\vs

From the previous step we have maps $g_i$ defined on $(-\infty,m_i]\cup X_i$ for $X_i$ a discrete set. As we did for Lemma \ref{tronco2}, we will check that these maps are increasing, namely that $x<y$ implies $g_i(x)<g_i(y)$, so they can be extended to $\R$ as homeomorphisms. This completes the definition of the maps $\rho'(x_i)$ that give the representation $\rho'$.

\vs
{\bf Claim:} $g_i:(-\infty,m_i]\cup X_i\to\R$ is increasing.
\vs

\begin{proof} On the sets $X_i$ we can argue as in Lemma \ref{tronco2}, for case A it may be helpful to look at Figure \ref{f3}. It remains to show that $g_i(x)<g_i(y)$ when $x=m_i$ and $y=\min X_i$. The proof splits into two cases depending on $i$: Recall the construction of $S_1$, and let $i_*$ be such that $a_l=x_{i_*}^{\epsilon}$. 

If $i\neq i_*$, then $g_i$ maps $X_i-(-\infty,m_i]$ into $(M,+\infty)$, since we picked $s_{r,j}>M$ for $r>1$ and $r=1,j>l$. By definition $M\geq m_i$ for all $i$, hence $g_i$ is increasing. 

For $i_*$ we discuss according to $\epsilon=\pm 1$. If $\epsilon=1$, then by our definition of $l$ we have $s_{1,l}>m_{i_*}$, and we had set $g_{i_*}(s_{1,l})=s_{1,l+1}>M>\rho(x_{i_*})(m_{i_*})=g_{i_*}(m_i)$. We also have $y=s_{1,l}$, so this gives the claim. In case $\epsilon=-1$, we have $s_{1,l}>\rho(x_{i_*})(m_{i_*})$ (also by definition of $l$) and we had set $g_{i_*}(s_{1,l+1})=s_{1,l}>\rho(x_{i_*})(m_{i_*})=g_{i_*}(m_{i_*})$. Notice that $y=s_{1,l+1}$ (definition of $l$, and picking $s_{1,l+1}>M\geq m_{i_*}$), so this case is finished as well.

\end{proof}

\vs
{\flushleft \bf Step 5:} Verification of the properties of $\rho'$.
\vs

Point 1 in the statement is clear, and implies point 2 and the first part of point 3. Notice that $S_r$ is the trajectory of $\rho'(w^r)(p)$ by $w$ under the representation $\rho'$. So we have that $\rho'(w^r)(p)=s_{r,0}$, which tends to $+\infty$ with $r$, and thus $\rho'(w)$ has no fixed points after $p$.

For point 4 notice that $s_{r,j}$ is in the orbit of $p$ for all $r\geq 1$ and $j=0,\ldots,m$. Then we have:

\begin{itemize}
\item In Case A, $s_{2,0}$ is fixed by $\rho'(x_2x_1^{-1}x_2x_1)$.
\item In Case B, $s_{2,j_1}$ is fixed by $\rho'(x_{i_1}^{\epsilon}a_{j_1+1})$ for some $\epsilon=\pm 1$.
\end{itemize} 

This gives that the orbit of $p$ has non trivial stabilizer under $\rho'$.

$\hfill \Box$

\section{Proof of Theorem \ref{manija}}

Let $\F_n = \langle x_1,...,x_n\rangle$. Then a group $G = \F_n*_{\Z}H$ as in the statement can be written as $\F_n*_{w=h}H$ where $w\in\F_n$ is a cyclically reduced word and $h\in H$.   

Let $<$ be an order on $G$ and $\rho$ a dynamical realization for $<$ (based at 0). We shall define a new representation $\bar\rho$ that is a small perturbation of $\rho$ and has non trivial stabilizer on the orbit of 0. Then the theorem will follow from Proposition \ref{aislado}. 

Let $g=\rho(h)=\rho(w)$. Let $V_{w}(g) = \{\rho'\in\text{Rep}(\F_n,Homeo_{+}(\R)): \rho'(w)=g \}$. We will find some $\rho'\in V_{w}(g)$ close to $\rho|_{\F_n}$ so that the representation $\bar\rho$ defined on $G$ by gluing it with $\rho|_H$ has non trivial stabilizer on the orbit of 0. 



Take an arbitrarily large compact interval $K\subseteq \R$. Since $\rho$ has no global fixed points, being a dynamical realization \cite{GOD}, we can build a finite $\rho$-trajectory $S$, with respect to the generating system $\{H,x_1,\ldots,x_n\}$, which is an increasing sequence begining at $0$ and ending outside $K$. By a $\rho$-trajectory with respect to $\{H,x_1,\ldots,x_n\}$ we mean that each point of $S$ is obtained from the previous one by acting either by $\rho(h)$ for $h\in H$ or by $\rho(x_i)^{\pm 1}$ for some $i\in\{1,\ldots,n\}$.  

By keeping track of the generators, we can split $S$ into a union of trajectories by $\rho|_{\F_n}$ that are connected by the action of elements of $H$ (where the union of trajectories is sequence concatenation). Namely, we can write $S=\bigcup_{j=1}^k S(\rho|_{\F_n},v_j,p_j)$ where $v_j\in\F_n$, and you obtain $p_{j+1}$ by acting on $\rho(v_j)(p_j)$  by some $\rho(h_j)$ for $h_j\in H$. Notice that $p_1=0$. Observe also that if $\rho|_H$ has no global fixed point in $K\cap[0,+\infty)$ we may choose $S=(0,\rho(h)(0))$, the trajectories by $\rho|_{\F_n}$ are just points, and the $v_j$ are trivial.

Let $p=\rho(v_k)(p_k)$ be the last element of $S$. 



\vs
{\flushleft \bf Claim:} The representation $\rho|_{\F_n}$, and the $p,p_1,\ldots,p_k\in\R$ and $w,v_1,\ldots,v_k\in\F_n$ we just constructed, are in the hypothesis of Lemma \ref{tronco}.
\vs

\begin{proof}
Recall we are assuming $w$ cyclically reduced. That $\rho(w)(p)\neq p$ is clear since $\rho$ is the dynamical realization of a total order and $p$ is in the orbit of $0$. Then we can apply Lemma \ref{increasing} since $p=\max S$.
\end{proof}

Let $\rho_0$ be the representation obtained from $\rho|_{\F_n}$ applying Lemma \ref{tronco}. Let $q=\min Fix(g)\cap (p,+\infty)$, with the convention that $\min\emptyset=+\infty$. The case for $q=+\infty$ is simpler, so we will focus on the construction when $q<+\infty$. We conjugate $\rho_0$ to an action on $(-\infty,q]$ with $q$ as a global fixed point, by a conjugation $\psi:\R\to (-\infty,q)$ that restricts to the identity up to $\max\{p,\rho(w)(p)\}$ and such that $\rho(w)|_{(-\infty,q]} = \psi \rho_0(w) \psi^{-1}$. This is possible by Lemma \ref{promotion}.

We use Lemma \ref{tronco2} to get a representation $\rho_1$ of $\F_n$ on $(q,+\infty)$ with $\rho_1(w)=g|_{(q,+\infty)}$. We define $\rho'\in \text{Rep}(\F_n,Homeo_{+}(\R))$ by $$\rho'(\gamma)(x)=\left\{ \begin{array}{ccc} \psi\rho_0(\gamma)\psi^{-1}(x) & \text{if}& x\leq q \\ \rho_1(\gamma)(x) &\text{if}& x>q \end{array}\right.$$  

Notice that $\rho'(w)=g$, so we get $\bar{\rho}\in \text{Rep}(G,Homeo_{+}(\R))$ that agrees with $\rho'$ on $\F_n$ and with $\rho|_H$ on $H$. Choosing $K$ large enough we can get $\bar{\rho}$ arbitrarily close to $\rho$. It remains to show that the orbit of $0$ has non trivial stabilizer. By Lemma \ref{tronco} we have $S(\rho_0,v_j,p_j)=S(\rho,v_j,p_j)$ for all $j=1,\ldots,k$, and since $\psi$ is the identity up to $\max\{p,\rho(w)(p)\}$ these trajectories are not affected by the conjugation, so $S(\rho',v_j,p_j)= S(\rho,v_j,p_j)$. Thus $p$ is in the $\bar{\rho}$ orbit of $0$. On the other hand, the stabilizer of $p$ by $\rho_0$ is non trivial by Lemma \ref{tronco}, and $\psi(p)=p$, so $p$ has the same stabilizer under $\rho'$. The stabilizer of $p$ under $\bar{\rho}$ contains it, so it is non trivial. This gives the theorem by Proposition \ref{aislado}.

\section{Proof of Theorem \ref{hnn}} 

Let $G=\langle t,x_1,...,x_n|tw_1t^{-1}=w_2\rangle$ as in the statement. We can assume that $w_1$ and $w_2$ are cyclically reduced (possibly taking an equivalent presentation). 

Let $<$ be an order on $G$ and $\rho$ its dynamical realization. The proof follows the same strategy as that of Theorem \ref{manija}: we shall define a new representation $\bar\rho$ that is a small perturbation of $\rho$ and has non trivial stabilizer on the orbit of 0. Then finish by Proposition \ref{aislado}.

Take an arbitrarily large compact interval $K\subseteq \R$. As we did for Theorem \ref{manija}, we take a finite increasing $\rho$-trajectory $S$, with respect to the generating set $\{t,x_1,...,x_n\}$, that begins at $0$ and ends outside $K$. This means each point of $S$ is obtained from the previous one by acting either by $\rho(t)^{\pm 1}$ or by $\rho(x_i)^{\pm 1}$ for some $i\in\{1,\ldots,n\}$. 

Again as in Theorem \ref{manija}, we split $S$ into a union of trajectories by $\rho|_{\F_n}$. So we write $S=\bigcup_{j=1}^k S(\rho|_{\F_n},v_j,p_j)$, where $p_1=0$ and $p_{j+1}$ is obtained by acting on $\rho(v_j)(p_j)$ by $\rho(t)^{\pm 1}$. (It is possible that $v_j$ may be trivial, in which case the trajectory $S(\rho|_{\F_n},v_j,p_j)$ is the single point $p_j$). 

Let $p=\rho(v_k)(p_k)$ be the last element of $S$.

Consider the word $\bar w=t^{-1}w_2^{-1}tw_1$ that represents the identity on $G$. Notice that the trajectory $S(\rho,\bar w,p)$ is decomposed as follows: the initial segment is $S(\rho,w_1,p)$, next comes $S(\rho,w_2,\rho(t)(p))$ traversed backwards, and then the final point is $p$ (since $\bar w$ is the identity in $G$, the trajectory must be closed).  

As in the statement of Lemma \ref{tronco}, we consider $$d_{ij} = d_{v_j}(S(\rho,v_j,p_j),i) \text{ for } i=1,\ldots,n \text{ and } j=1,\ldots,k$$
Define as well $d_i^{(1)} = d_{w_1}(S(\rho,w_1,p),i)$ and $d_i^{(2)}=d_{w_2}(S(\rho,w_2,\rho(t)(p)),i)$.   

By Lemma \ref{increasing} there exists $i_0$ with $d_{i_0}^{(1)}\geq\max_j d_{i_0j}$. (Recalling that $S$ is an increasing sequence and $p$ its last point). We will assume that $d_{i_0}^{(1)}\geq d_{i_0}^{(2)}$ for simplicity. Otherwise, we can invert the roles of $w_1$ and $w_2$, by replacing $S$ by $S\cup\{\rho(t)(p)\}$ and $\bar w$ by $tw_1^{-1}t^{-1}w_2$. This does not affect the $d_{i}^{(1)}$ and $d_{i}^{(2)}$ by Remark \ref{inverso}.

We will apply Lemma \ref{tronco} to $\rho|_{\F_n}$, with the following setting:
\begin{itemize}
\item $v_1,\ldots,v_k$ as constructed above, $v_{k+1}=w_2$, and $w=w_1$.
\item $p_1,\ldots,p_k$ and $p$ as constructed above, and $p_{k+1}=\rho(t)(p)$.    
\end{itemize}

Notice the hypotheses of Lemma \ref{tronco} hold for this setting, where the notation in the statement would be $d_{i,k+1}= d_i^{(2)}$ and $d_i=d_i^{(1)}$. From Lemma \ref{tronco} we obtain a representation $\rho_0\in \text{Rep}(\F_n,Homeo_{+}(\R))$.


 






This $\rho_0$ is a perturbation of $\rho|_{\F_n}$ that satisfies what we would expect from the restriction to $\F_n$ of our desired representation. However, it is not always possible to extend it to $G$. We deal with this problem in what follows, splitting the procedure into two cases. For both we will need to notice that $\rho_0(w_1)$ agrees with $\rho(w_1)$ up to $p$, and $\rho_0(w_2)$ agrees with $\rho(w_2)$ up to $\rho(t)(p)$. These maps are partially conjugated by $\rho(t)$, meaning that for $x\in(-\infty,p]$ we have $\rho(t)\rho_0(w_1)(x)=\rho_0(w_2)\rho(t)(x)$.

\begin{itemize}


\item[Case 1] If $\rho_0(w_2)$ has no fixed points greater than $\rho(t)(p)$, then it is weakly conjugated to $\rho_0(w_1)$ by a map that is strong on $(-\infty,p]$ (see definition \ref{weakdefi}). Thus by Lemma \ref{promotion} we can find a homeomorphism $\varphi$ that conjugates $\rho_0(w_1)$ and $\rho_0(w_2)$, and agrees with $\rho(t)$ on $(-\infty,p]$. In this case we can define $\bar \rho(t)=\varphi$ and $\bar \rho|_{\F_n}=\rho_0$. 

\item[Case 2] On the other hand, assume that $\rho_0(w_2)$ has fixed points greater than $\rho(t)(p)$. Let $M=\max S\cup S(\rho,\bar w,p)$. As in the proof of Theorem \ref{manija}, we can conjugate $\rho_0$ to a representation on $(-\infty,q]$ for some $q>M$, that has $q$ as a global fixed point, by a map that is the identity on $(-\infty,M]$. Call the result of this conjugation by $\rho'_0$.

Let $z$ be the first fixed point of $\rho'_0(w_2)$ on $[\rho(t)(p),q]$. Then $z<q$ by the assumption of this case. Notice that $\rho'_0(w_1)$ and $\rho'_0(w_2)|_{(-\infty,z]}$ are weakly conjugated, because of the definition of $z$ and their partial conjugation up to $p$ by $\rho(t)$. By Lemma \ref{promotion} we can find $\varphi:(-\infty,q]\to(-\infty,z]$ a homeomorphism that conjugates $\rho'_0(w_1)$ to $\rho'_0(w_2)|_{(-\infty,z]}$ and agrees with $\rho(t)$ on $(-\infty,p]$.

Extend $\varphi$ to a homeomorphism of $\R$ that takes $[q,q+1]$ to $[z,q]$ and $[q+m,q+m+1]$ to $[q+m-1,q+m]$ for $m\geq 1$.

Applying Lemma \ref{tronco2} recursively, we can define $$\rho'_m\in Rep(\F_n,Homeo_{+}([q+m-1,q+m]))$$ so that $\rho'_m(w_1)=\varphi^{-1}\rho'_{m-1}(w_2)\varphi$ for $m\geq 1$, where the base case is $\rho'_{1}(w_1)=\varphi^{-1}\rho'_0(w_2)|_{[z,q]}\varphi$.

We define $\rho'\in\text{Rep}(\F_n,Homeo_{+}(\R))$ so that $\rho'(\gamma)|_{(-\infty,q]} = \rho'_0(\gamma)$ and \\ $\rho'(\gamma)|_{(q+m-1,q+m]}=\rho_m(\gamma)$ for $m\geq 1$. Then we define $\bar\rho\in\text{Rep}(G,Homeo_{+}(\R))$ by $\bar\rho|_{\F_n}=\rho'$ and $\bar\rho(t)=\varphi$.

\end{itemize}

In both cases the resulting $\bar\rho$ is a perturbation of $\rho$ that can be made arbitrarily small by choosing $K$ large enough. The stabilizer of $p$ by $\rho_0$ is non trivial and this fact is preserved by the modifications in case 2. On the other hand, the trajectory $S$ is not affected either, since $S(\rho,v_j,p_j)=S(\bar\rho,v_j,p_j)$ for all $j$, and $\bar\rho(t)$ agrees with $\rho(t)$ on $(-\infty,p]$. Thus the stabilizer of $0$ by $\bar\rho$ is non trivial. The theorem follows from Proposition \ref{aislado}.

\section{Proof of Theorem \ref{positiveword}}

Throughout this section we consider $\bar w=tw_1tw_2tw_3$ as a word that represents the identity in $G$.

\begin{lem} \label{lasmagias} $G$ can be presented as $G=\langle t,x_1,...,x_n|tw_1tw_2tw_3=1\rangle$ where the strings $w_iw_j^{-1}$ are cyclically reduced for $i\neq j\in\{1,2,3\}$.
\end{lem}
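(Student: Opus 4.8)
The plan is to exploit the moves on the presentation that preserve both the isomorphism type of $G$ and the special form $tw_1tw_2tw_3$ of the relator, and to track their effect on the three products $u_1=w_1w_2^{-1}$, $u_2=w_2w_3^{-1}$ and $u_3=w_1w_3^{-1}=u_1u_2$. There are four basic such moves. (i) The substitution $t\mapsto ta$ with $a\in\F_n$, which gives the relator $s(a^{-1}w_1)s(a^{-1}w_2)s(a^{-1}w_3)$ with $s=ta$, i.e. replaces each $w_i$ by $a^{-1}w_i$ and hence conjugates every $u_k$ by $a$. (ii) The substitution $t\mapsto at$ followed by conjugating the relator by $a$, which replaces each $w_i$ by $w_ia^{-1}$ and leaves every $u_k$ unchanged. (iii) A change of free basis of $\langle x_1,\dots,x_n\rangle$, i.e. a common automorphism $\alpha\in\mathrm{Aut}(\F_n)$ with $w_i\mapsto\alpha(w_i)$ and $u_k\mapsto\alpha(u_k)$. (iv) Cyclic permutation of $(w_1,w_2,w_3)$ (conjugating the relator by $tw_1$, etc.), together with the relabelings coming from $t\mapsto t^{-1}$. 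The conclusion of the lemma is exactly that $u_1,u_2,u_3$ are simultaneously cyclically reduced as words; equivalently, for nonempty $w_i$, that no two of them share a prefix or a suffix (a pair $w_i=w_j$ gives the empty, hence cyclically reduced, product, so by the standing hypothesis we may assume at least two of the $w_i$ distinct).

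Combining (i)--(iv), the net effect on the triple $(u_1,u_2,u_3)$ is precisely that of a \emph{common} automorphism of $\F_n$ (its inner part coming from (i), its outer part from (iii)), while (ii) lets us adjust the individual $w_i$ without disturbing the $u_k$. The whole lemma thus reduces to the following purely free-group statement: for every $a,b\in\F_n$ there is $\beta\in\mathrm{Aut}(\F_n)$ making $\beta(a)$, $\beta(b)$ and $\beta(ab)$ all cyclically reduced.

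I would first record, geometrically, why a genuine change of basis is unavoidable. A nontrivial word is cyclically reduced iff its axis in the Cayley tree of $\F_n$ passes through the identity vertex, so a single conjugation does the job iff the axes of $u_1,u_2,u_3$ share a vertex; but these axes can be pairwise disjoint. For instance with $u_1=x$ and $u_2=yxy^{-1}$ the two axes are the disjoint $x$-lines through $e$ and through $y$, and no conjugation works. An automorphism, not being an isometry of the tree, can nonetheless realign the axes: in this example $x\mapsto xy$ sends the pair to $(xy,yx)$, whose axes and that of the product $xy^2x$ all meet at $e$.

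To produce $\beta$ in general I would minimize the total word length $\Phi(\beta)=|\beta(a)|+|\beta(b)|+|\beta(ab)|$ over $\beta\in\mathrm{Aut}(\F_n)$; the minimum is attained, since $\Phi$ takes values in the nonnegative integers. Among the minimizers I would then choose one minimizing the number of the three words that fail to be cyclically reduced, and argue by Whitehead peak reduction that this number is $0$: if, say, $\beta(a)=s(\cdots)s^{-1}$ for a letter $s$, a length-neutral Whitehead (Nielsen) automorphism should strip the offending letter without increasing $\Phi$, lowering the count. The main obstacle is exactly this last step. Because one automorphism acts on all three products at once, and because of the relation $u_3=u_1u_2$, cyclically reducing one product can lengthen or un-reduce another; moreover the naive monovariant, the sum of cyclic (conjugacy) lengths, is the \emph{wrong} one, as it is minimized by configurations (such as the example above) that are not simultaneously cyclically reducible. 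The technical heart is therefore to show that the length-preserving moves furnished by peak reduction can always be steered so as to make all three products cyclically reduced simultaneously, which I expect to settle by a case analysis of the cancellation patterns of $\beta(a),\beta(b)$ at a length minimizer.
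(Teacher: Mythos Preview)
Your proposal has a genuine gap: the proof is left incomplete precisely at the decisive step. You reduce to the claim that for any $a,b\in\F_n$ there is $\beta\in\mathrm{Aut}(\F_n)$ with $\beta(a)$, $\beta(b)$, $\beta(ab)$ all cyclically reduced, and then propose to minimize $\Phi(\beta)=|\beta(a)|+|\beta(b)|+|\beta(ab)|$ and argue by peak reduction; but you explicitly leave the conclusion---that at a minimizer all three are cyclically reduced---as an ``expectation'' to be settled by case analysis. As you yourself observe, the constraint $u_3=u_1u_2$ couples the three words in a way that defeats the obvious monovariants, so this is not a minor omission.

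The missed idea is that you are minimizing the wrong quantity. The paper works with the $w_i$ themselves, not with the products $u_k$, and minimizes $|w_1|+|w_2|+|w_3|$ over presentations of the required form, using only your moves (i) and (ii). The point is that the three $w_i$ are \emph{independent} (unlike $u_1,u_2,u_3=u_1u_2$), so the accounting is trivial: if $w_i$ and $w_j$ share a first letter $a$, the substitution $t\mapsto ta$ replaces each $w_r$ by $a^{-1}w_r$, shortening $w_i$ and $w_j$ by $1$ each while lengthening the remaining $w_k$ by at most $1$, for a net drop of at least $1$. Common last letters are handled symmetrically by $t\mapsto at$. Hence at a minimum no two $w_i$ share a first or last letter, which is exactly the condition that each string $w_iw_j^{-1}$ be cyclically reduced. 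No change of free basis (your move (iii)) is needed, and no Whitehead theory enters.
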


\vs
\begin{proof} Among the presentations of $G$ of the form $\langle t,x_1,...,x_n|tw_1tw_2tw_3=1\rangle$ we choose one so that $|w_1|+|w_2|+|w_3|$ is minimal. We will show that this presentation is the desired one. 
The string $w_iw_j^{-1}$ is cyclically reduced when $w_i$ and $w_j$ do not have a common initial segment nor a common final segment. If this is not the case, we will give another presentation that contradicts the minimality assumption. Suppose for instance that $w_1=av_1$ and $w_2=av_2$ where $a$ is a common initial segment. Then we change the presentation by the Tietze transformation $t'=ta$, which gives $\langle t',x_1,...,x_n|t'w'_1t'w'_2t'w'_3=1\rangle$ where $w'_1=v_1$, $w'_2=v_2$, and $w'_3=a^{-1}w_3$ (after a possible reduction). Then $|w'_1|+|w'_2|+|w'_3| \leq |w_1|+|w_2|+|w_3| - |a|$ contradicting the minimality. For a final segment, or other values of $i,j$, the argument is very similar.
\end{proof}

\begin{defi} Given $g_1,g_2,g_3\in Homeo_{+}(\R)$ we consider the equation 
$$ (E)\text{ }\text{ } Xg_1Xg_2Xg_3=id $$ 
in the group $Homeo_{+}(\R)$. 
A partial solution is a homeomorphism \\
$h:(-\infty,p)\to(-\infty,q)$ such that $hg_1hg_2hg_3(x)=x$ for every $x$ in the domain of the composition. 
\end{defi}

\begin{rem} \label{borde} We do not exclude $+\infty$ as a value for $p$ or $q$ in the definition of partial solution. We use the notation $h(p)=q$.
\end{rem}

If $h$ is a partial solution of the equation $(E)$ then the domain of the composition $hg_1hg_2hg_3$ is an interval of the form $(-\infty,e_h)$.

\begin{rem} If $h:(-\infty,p)\to (-\infty,h(p))$ is a partial solution  of $(E)$
and \\ $h_1:(-\infty,p_1)\to(-\infty,h_1(p_1))$ is a partial solution extending $h$ (where $p_1>p$), then $e_{h_1}>e_h$.
\end{rem}

We will need a notion of trajectories for partial solutions of the equation $(E)$. It will also be important to look at the largest point of a trajectory where we apply the partial solution. As we did in the context of representations, we will build partial (and total) solutions via pre-fixing trajectories.

\begin{defi} Given the equation $(E)$, a partial solution $h$ and $x<e_h$, we define a sequence 
$$S(E,h,x)=(x,g_3(x),hg_3(x),g_2hg_3(x),hg_2hg_3(x),g_1hg_2hg_3(x),hg_1hg_2hg_3(x)=x)$$ 

In turn, for any sequence of the form $S=(a_1,b_1,a_2,b_2,a_3,b_3,a_1)\in\R^7$ we define $d_X S=\max\{b_1,b_2,b_3\}$. 
\end{defi}

These definitions are very similar to the definition of a trajectory for a representation, given in section \ref{troncocomun}, and to Definition \ref{trajectory}. In fact, when $\rho\in \text{Rep}(G,Homeo_{+}(\R))$ and $g_j=\rho(w_j)$, we have that $\rho(t)$ is a solution of $(E)$ and (setting $t=x_{n+1}$, and recalling that $\bar w=tw_1tw_2tw_3$) we get $d_{\bar w}(S(\rho,\bar w,x),n+1)=d_XS(E,\rho(t),x)$. 

Notice that $d_X(E,h,x)$ tends to $p$ as $x$ tends to $e_h$, by our definition of $e_h$.

\begin{lem} \label{acotadas} If $h:(-\infty,p)\to(-\infty,h(p))$ is a partial solution of $(E)$ with $p<+\infty$, then $h(p)<+\infty$. 
\end{lem}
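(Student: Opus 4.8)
The plan is to argue by contradiction: suppose $p<+\infty$ but $h(p)=+\infty$, so that $h$ is an increasing homeomorphism from $(-\infty,p)$ onto all of $\R$. The whole argument rests on the fact, recorded just before the statement, that $d_X S(E,h,x)\to p$ as $x\to e_h^{-}$, together with the cyclic shape of the trajectory $S(E,h,x)=(a_1,b_1,a_2,b_2,a_3,b_3,a_1)$. Here the points where $h$ is applied are exactly $b_1=g_3(x)$, $b_2=g_2hg_3(x)$ and $b_3=g_1hg_2hg_3(x)$, so that $a_2=h(b_1)$, $a_3=h(b_2)$, $a_1=x=h(b_3)$, while dually $b_1=g_3(a_1)$, $b_2=g_2(a_2)$ and $b_3=g_1(a_3)$. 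Since the composition $hg_1hg_2hg_3$ is defined exactly on $(-\infty,e_h)$, every $b_i(x)$ satisfies $b_i(x)<p$ for $x<e_h$.

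First I would isolate the single analytic ingredient. Because $h(p)=+\infty$ and $h$ is increasing, we have $h(b)\to+\infty$ as $b\to p^{-}$; and because each $g_j\in Homeo_{+}(\R)$ is an increasing bijection of $\R$, we have $g_j(y)\to+\infty$ as $y\to+\infty$. Now pick a sequence $x_n\to e_h^{-}$. By the stated convergence, $\max\{b_1,b_2,b_3\}(x_n)\to p$, and passing to a subsequence we may assume a fixed index $i$ realizes the maximum with $b_i(x_n)\to p$. The proof then splits into three cases according to $i$, the point being that in each case the divergence is pushed cyclically onto the next argument of $h$, which is impossible since that argument stays bounded above by $p<+\infty$.

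Concretely: if $i=1$, then $a_2=h(b_1)\to+\infty$, hence $b_2=g_2(a_2)\to+\infty$, contradicting $b_2<p<+\infty$. If $i=2$, then $a_3=h(b_2)\to+\infty$, hence $b_3=g_1(a_3)\to+\infty$, again contradicting $b_3<p<+\infty$. The remaining case $i=3$ is the one requiring a little more care, since it does not contradict $p<+\infty$ directly: here $a_1=x_n=h(b_3)\to+\infty$, so necessarily $e_h=+\infty$; but then $b_1=g_3(x_n)=g_3(a_1)\to+\infty$, which once more contradicts $b_1<p<+\infty$. Thus every case is impossible, so $h(p)=+\infty$ cannot occur when $p<+\infty$, which is the lemma.

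I expect the main obstacle to be the bookkeeping in the third case, namely recognizing that one cannot stop at the conclusion $x_n\to+\infty$ but must travel one more step around the cycle to $b_1=g_3(x)$ in order to reach a contradiction. The uniform bound $b_i(x)<p$ valid on the whole domain $(-\infty,e_h)$, rather than merely in the limit, is exactly what lets all three cases close; verifying that this bound is available for the subsequential limits (and that the chosen index genuinely drives its successor to $+\infty$) is the only delicate point, and it follows directly from the fact that each $x_n$ lies in the domain of the composition.
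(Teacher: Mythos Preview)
Your argument is correct and follows essentially the same route as the paper's proof: both use $d_X S(E,h,x)\to p$ to find an index $i$ with $b_i\to p$, then push the divergence $h(b_i)\to+\infty$ one step around the cycle to contradict $b_{i+1}<p$. The only cosmetic differences are that the paper first records $e_h\le g_3^{-1}(p)<+\infty$ and uses the monotonicity of each $b_j(x)$ to pick a fixed index $i$ (avoiding your subsequence extraction), and then handles all three cases at once via cyclic indexing rather than splitting them out; in particular your careful case $i=3$ is subsumed in the paper by the preliminary bound on $e_h$.
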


\begin{proof}
Since $h$ must be defined on $(-\infty,g_3(e_h))$, we have $e_h\leq g_3^{-1}(p)<+\infty$. Notice that every entry in $S(E,h,x)=(a_1(x),b_1(x),a_2(x),b_2(x),a_3(x),b_3(x),a_1(x))$ is increasing on $x\in(-\infty,e_h)$. Also that $b_j(x)<p$ for every $j$ and $x<e_h$, since $h$ must be defined on $b_j(x)$. Recalling that  $d_XS(E,h,x)\to p$ as $x\to e_h$ we see there is some $i$ so that $b_i(x)\to p$ when $x\to e_h$. If $h(p)=+\infty$ then $a_{i+1}(x)=h(b_i(x))\to +\infty$ as $x\to e_h$. But then $b_{i+1}(x)=g_{i+1}(a_{i+1}(x))\to+\infty$ which is absurd since $b_j(x)<p$ for every $j$.

\end{proof}

\begin{figure}[h!] 
\includegraphics[width=1\textwidth]{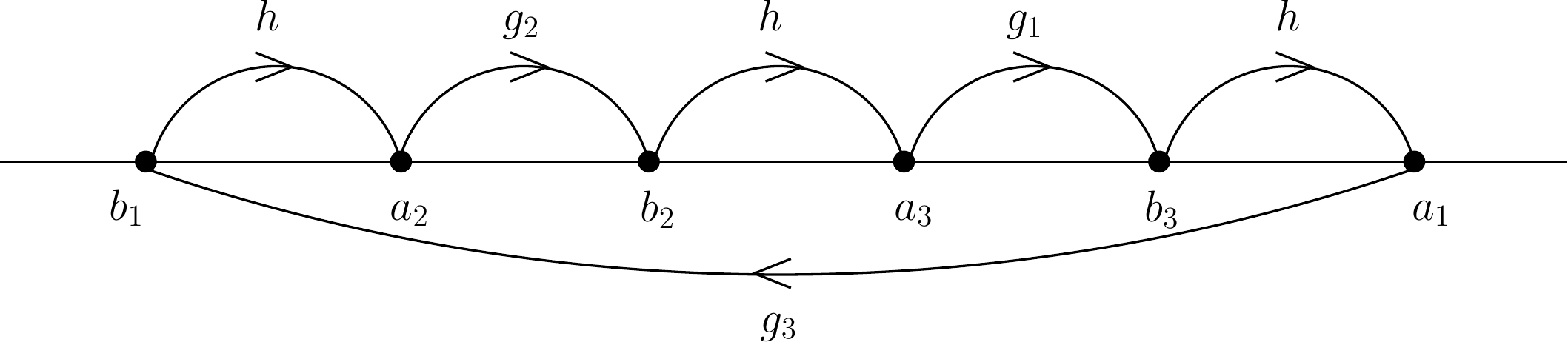}
\caption{This shows a possible sequence $S(E,h,x)$. It is an instance of Case A in Lemma \ref{extension}.}
\label{f5}
\end{figure}

In light of Lemma \ref{acotadas} we can assume that partial solutions are defined on $(-\infty,p]$, and so we have $d_XS(E,h,e_h)=p$.

We consider the cyclic permutations of the equation $(E)$:
\begin{itemize} 
\item $(E')$ $\ \ $ $Xg_3Xg_1Xg_2=id$
\item $(E'')$ $\ \ $ $Xg_2Xg_3Xg_1=id$
\end{itemize}

Observe that $(E)$, $(E')$ and $(E'')$ share the same partial solutions, though the domains of the composition may be different.
 
\begin{lem}\label{extension}If $h:(-\infty,p]\to(-\infty,h(p)]$ is a partial solution of $(E)$ and there exists $i,j$ such that $g_i(e_h)\neq g_j(e_h)$ then we can extend $h$ to a partial solution $h'$ defined on $(-\infty,p+\epsilon]$ for some $\epsilon>0$.
\end{lem}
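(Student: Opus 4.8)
The plan is to push the domain of $h$ just past $p$ by prescribing how the closing trajectory continues, in the spirit of the ``pre-fix a trajectory, then realize it by a map'' technique used in Lemmas \ref{tronco2} and \ref{tronco}. First I would set up notation at the boundary. Writing $S(E,h,e_h)=(a_1,b_1,a_2,b_2,a_3,b_3,a_1)$ with $a_1=e_h$, this is a closed hexagonal trajectory whose $h$-edges are $b_1\mapsto a_2$, $b_2\mapsto a_3$, $b_3\mapsto a_1$ and whose $g$-edges are $a_1\mapsto b_1$ (by $g_3$), $a_2\mapsto b_2$ (by $g_2$), $a_3\mapsto b_3$ (by $g_1$). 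Since $h$ is increasing, the cyclic order of the $a_j$ agrees with that of the corresponding $b_j$, and by the normalization $d_XS(E,h,e_h)=p$ we have $\max\{b_1,b_2,b_3\}=p$; I would record which corners attain this maximum.

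Next I would use that $(E)$, $(E')$ and $(E'')$ share partial solutions, so after replacing $(E)$ by a cyclic version I may assume the maximizing corner is the last $h$-application, i.e.\ $b_3=p$, keeping the names $e_h,a_j,b_j$ for this version; then $h(p)=a_1=e_h$. The extension is essentially forced: for $x$ slightly above $e_h$ I define $S(E,h,x)$ by the recursion $a_1=x$, $b_1=g_3(a_1)$, $a_2=h(b_1)$, $b_2=g_2(a_2)$, $a_3=h(b_2)$, $b_3=g_1(a_3)$, using the already defined $h$ whenever the argument is $\le p$, and I define $h$ on the new points (those $b_j>p$) by the equalities $h(b_j)=a_{j+1}$ (indices mod $3$, $a_4=a_1$) needed to close the trajectory. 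The clean situation is a unique maximizing corner: with the above normalization $b_1,b_2$ stay $\le p$ for $x$ near $e_h$, so $a_2,a_3$ are computed from the old $h$, and $b_3=g_1hg_2hg_3(x)$ increases continuously through $p$. Setting $h(b_3):=x$ then defines $h$ on $(p,p+\eta]$ as the inverse of the increasing homeomorphism $x\mapsto b_3(x)$; by continuity it matches the previous value $h(p)=e_h$, is increasing, and satisfies $hg_1hg_2hg_3(x)=x$ on $(e_h,e_h+\delta]$. This yields the desired $h'$ with $e_{h'}>e_h$.

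The main obstacle is the tie case, where $p$ is attained at more than one corner, since increasing $x$ then forces $h$ to be defined simultaneously at several new points whose values are coupled through the $g_i$, and one must check these prescriptions assemble into a single increasing homeomorphism near $p$. Reformulating as in Lemma \ref{tronco2}, the only genuinely unrealizable configuration is the one where all three $b_j$ equal $p$: there the closing relations form a directed $3$-cycle $b_1\to a_2\to b_2\to a_3\to b_3\to a_1\to b_1$ of new points, which no increasing $h$ can realize. This is the local incarnation of the torsion obstruction, where $h$ would have to behave like an order-$3$ element.

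To rule that out I would use the hypothesis. A three-way tie forces $a_1=a_2=a_3=h(p)=e_h$, hence $g_1(e_h)=g_2(e_h)=g_3(e_h)=p$; so the assumption that $g_i(e_h)\neq g_j(e_h)$ for some $i,j$ is exactly what excludes the obstructed configuration. In the remaining two-way ties I would exploit the freedom in choosing the new values of $h$ (equivalently, choosing the intermediate corner $a_2=h(b_1)$ as a monotone function of $x$) to keep the two prescribed branches mutually consistent and increasing, obtaining an increasing extension on a one-sided neighbourhood of $p$; this is again a partial solution on $(-\infty,p+\epsilon]$. I expect this monotone-consistency check for the coupled two-way ties to be the most delicate part of the argument, whereas the unique-maximum case is immediate from the cyclic reduction.
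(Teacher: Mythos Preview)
Your setup, cyclic normalization to $b_3=p$, and treatment of the unique-maximum case are exactly the paper's Case~A: with $b_1,b_2<p$ the map $\psi=g_1hg_2hg_3$ is defined past $e_h$ and $h':=\psi^{-1}$ is the (unique) extension. Your exclusion of the three-way tie via the hypothesis is also the paper's first paragraph.

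The gap is the two-way tie. After normalizing to $b_3=b_2>b_1$ (the other tie reduces to this by a further cyclic shift), for $x>e_h$ the point $b_1=g_3(x)$ stays $\le p$, so $a_2=h(b_1)$ is \emph{already determined} by the old $h$; there is no freedom there. The two genuinely new $h$-values are $a_3=h(b_2)$ and $x=h(b_3)$, and they are coupled by $b_3=g_1(a_3)$. Writing $\varphi=hg_1$ for the unknown extension and $\psi=g_1^{-1}g_2hg_3$ (which uses only the old $h$ and is defined past $e_h$ since $b_1<p$), the closing relation $hg_1hg_2hg_3(x)=x$ becomes $\varphi^2\psi=\mathrm{id}$. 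So the problem is not a ``monotone-consistency check'' with a free parameter but literally a square-root problem: you must produce $\varphi$ with $\varphi^2=\psi^{-1}$ that extends $\varphi_0=hg_1$ on $(-\infty,e_h]$. The paper observes that $b_2=b_3$ forces $a_1=a_3$, i.e.\ $e_h\in\mathrm{Fix}(\varphi_0)$, and then invokes Lemma~\ref{raiz} and Remark~\ref{mas raiz} to pick such a square root agreeing with $\varphi_0$ up to the fixed point $e_h$; finally $h'=\varphi g_1^{-1}$ is the desired extension. Your proposal does not identify this reduction, and the vague plan of tuning an ``intermediate corner'' cannot succeed here because the only flexibility is precisely the choice of square root.
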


\begin{proof} Let $S(E,h,e_h)=(a_1,b_1,a_2,b_2,a_3,b_3,a_1)$. The condition that $g_i(e_h)\neq g_j(e_h)$ for some $i,j$ rules out that $b_1=b_2=b_3$. That is because $a_{i+1}=h(b_i)$ (where the indices are taken mod 3), so if $b_1=b_2=b_3=b$ then $b=p$ and $a_1=a_2=a_3=h(p)=e_h$, thus we would have $g_1(e_h)=g_2(e_h)=g_3(e_h)=p$.

 Hence we must have one of the following two cases (taking indices mod $3$):

\begin{itemize}
\item Case A: $b_j>b_{j\pm1}$ for some $j$.
\item Case B: $b_j=b_{j-1}>b_{j+1}$ for some $j$.
\end{itemize}

Possibly exchanging $(E)$ by a suitable cyclic permutation we can assume that $j=3$. Thus $b_3=d_XS(E,h,e_h)=p$.
\vs

Case A: Consider the map $\psi=g_1hg_2hg_3$ where this composition makes sense. It certainly is defined at $a_1=e_h$, with $\psi(e_h)=p$. Since the $g_i$ are defined on $\R$ and $h$ is defined up to $p>b_1,b_2$, then this composition is also defined up to $e_h+\delta$ for $\delta>0$. Then $h'=\psi^{-1}$ is defined on $(-\infty,p+\epsilon)$ for some $\epsilon>0$. It agrees with $h$ on $(-\infty,p]$ since $h$ satisfies $(E)$. It is also a partial solution of $(E)$: if $x<e_{h'}$, then $h'g_1h'g_2h'g_3(x)=\psi^{-1}g_1hg_2hg_3(x)=x$.

Notice that in this case the local extension $h'$ on $(-\infty,p+\epsilon)$ is unique.

\vs
Case B: Since $b_3=b_2$ we get $a_1=a_3$ by applying $h$, thus $a_1=e_h$ is fixed by $hg_1$. Writing $\varphi_0=hg_1$, which is defined on $(-\infty,e_h]$, we get that $\varphi_0^2 g_1^{-1}g_2hg_3(x)=x$ for $x<e_h$. If we take $\psi= g_1^{-1}g_2hg_3$, we see that it is defined on $(-\infty,e_h+\delta]$ for some $\delta>0$, since $b_1<p$. Let $\varphi$ be a square root of $\psi^{-1}$ that agrees with $\varphi_0$ up to $a_1$. This map exists by Lemma \ref{raiz} and Remark \ref{mas raiz}, since $a_1$ is fixed. Then $h'=\varphi g_1^{-1}$ is the desired extension, which is defined on $(-\infty,p+\epsilon]$ for some $\epsilon>0$.

\end{proof}

\begin{figure}[h!] 
\centering
\includegraphics[width=0.8\textwidth]{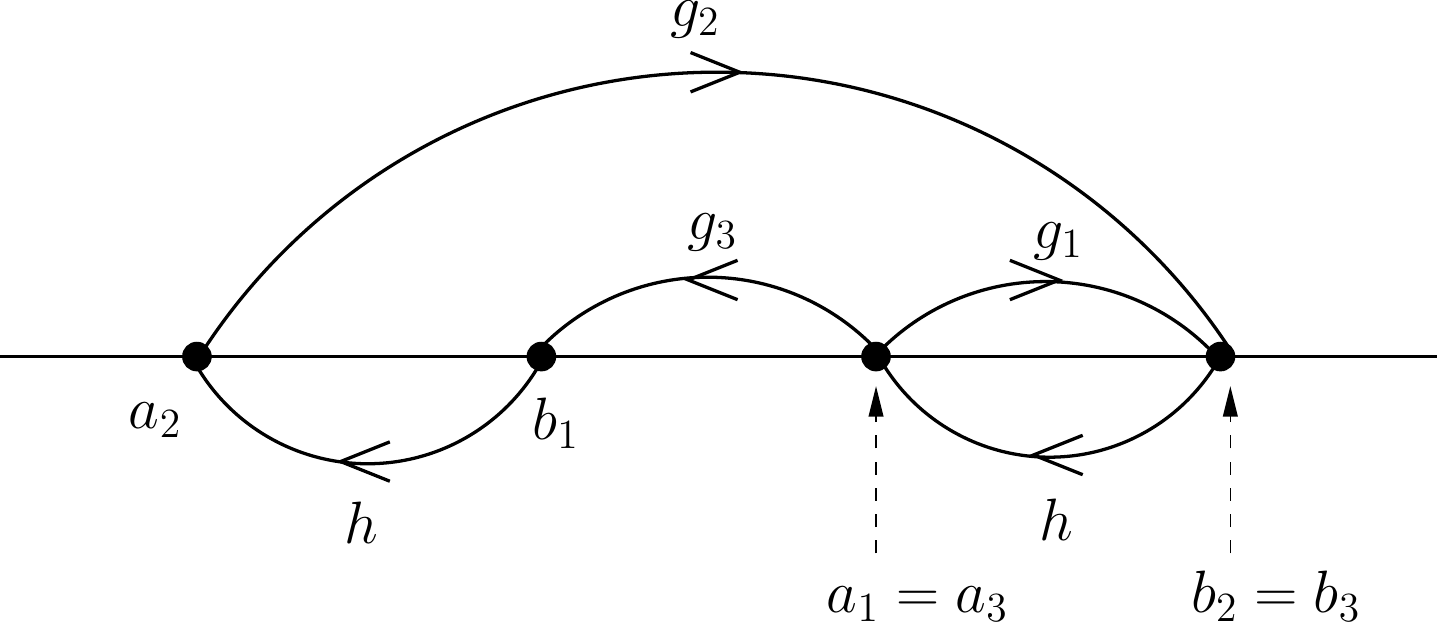}
\caption{This shows an instance of Case B in Lemma \ref{extension}.}
\label{f6}
\end{figure}

\begin{lem} \label{globalema} If $h:(-\infty,p)\to(-\infty,h(p))$ is a partial solution of $(E)$ and there exists $i,j$ such that $g_i(x)\neq g_j(x)$ for all $x\geq e_h$ then we can extend $h$ to a solution $h'\in Homeo_{+}(\R)$.
\end{lem}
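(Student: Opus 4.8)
The plan is to extend $h$ as far to the right as possible by iterating Lemma \ref{extension}, and then to argue that this process cannot halt before the domain exhausts all of $\R$. The essential feature of the hypothesis is that the non-degeneracy condition $g_i(x)\neq g_j(x)$ is imposed \emph{for all} $x\geq e_h$ and for a \emph{single} fixed pair $i,j$; this is precisely what is needed to keep feeding Lemma \ref{extension} as the right endpoint of the domain of composition grows.

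Concretely, I would consider the poset $\mathcal{P}$ of all partial solutions of $(E)$ that extend $h$, ordered by restriction, and apply Zorn's Lemma. The set $\mathcal{P}$ is nonempty since $h\in\mathcal{P}$. Given a chain in $\mathcal{P}$, its union is again an increasing, continuous map defined on an interval of the form $(-\infty,P)$ or $(-\infty,P]$, and it satisfies $(E)$ wherever the relevant composition is defined, because on every bounded subinterval it coincides with a member of the chain; hence the union lies in $\mathcal{P}$ and is an upper bound. Zorn's Lemma then yields a maximal element $\hat h$, defined on some $(-\infty,P)$, $(-\infty,P]$, or all of $\R$.

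The heart of the argument is to rule out $P<+\infty$. First, by Lemma \ref{acotadas} a maximal $\hat h$ with finite right endpoint may be taken defined on the closed interval $(-\infty,P]$ with $\hat h(P)<+\infty$. Since $\hat h$ extends $h$, the earlier remark guaranteeing that a strict extension strictly increases $e_h$ gives $e_{\hat h}\geq e_h$; therefore the fixed pair $i,j$ from the hypothesis still satisfies $g_i(e_{\hat h})\neq g_j(e_{\hat h})$. Lemma \ref{extension} then produces a partial solution on $(-\infty,P+\varepsilon]$ strictly extending $\hat h$, contradicting maximality. This monotone persistence of the non-degeneracy condition, which ensures that Lemma \ref{extension} can always be reapplied, is the main point, and the reason the hypothesis is stated uniformly for all $x\geq e_h$ rather than at a single point; it is the step I expect to require the most care.

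It then follows that $P=+\infty$, so $\hat h$ is defined on all of $\R$ and satisfies $\hat h g_1\hat h g_2\hat h g_3=id$ there. Being a partial solution, $\hat h$ is an increasing homeomorphism onto its image; and since the outermost map in $\hat h g_1\hat h g_2\hat h g_3=id$ is $\hat h$, its image must contain the image of the identity, namely all of $\R$. Hence $\hat h$ is surjective, so $\hat h\in Homeo_{+}(\R)$ is the desired global solution extending $h$.
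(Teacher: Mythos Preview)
Your argument is correct and is essentially the same as the paper's: take a maximal extension and use Lemmas \ref{acotadas} and \ref{extension} to see it must be defined on all of $\R$. You have simply spelled out what the paper compresses into one sentence, including the Zorn's Lemma step and the surjectivity of $\hat h$ (which follows since the outermost $\hat h$ in the identity $\hat h g_1\hat h g_2\hat h g_3=id$ hits every real number).
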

\begin{proof} A maximal extension of $h$ is an homeomorphism of the line by Lemmas \ref{acotadas} and \ref{extension}.
\end{proof}

\newpage 

{\bf Proof of Theorem \ref{positiveword}} 
\vs

Let $<$ be a left order on $G$ with dynamical realization $\rho$. We will repeat the same strategy of the previous theorems, that is to construct a small perturbation of $\rho$ that has non trivial stabilizer on the orbit of $0$.

Take an arbitrarily large compact interval $K\subseteq \R$. As in the previous theorems, we can find a finite increasing $\rho$-trajectory $S$, with respect to the generating set $\{t,x_1,...,x_n\}$, that begins at $0$ and ends outside $K$. Recall that each point of $S$ is obtained from the previous one by acting either by $\rho(t)^{\pm 1}$ or by $\rho(x_i)^{\pm 1}$ for some $i\in\{1,\ldots,n\}$. As in Theorem \ref{hnn}, write $S=\bigcup_{j=1}^k S(\rho|_{\F_n},v_j,p_j)$, where $p_1=0$ and $p_{j+1}$ is obtained by acting on $\rho(v_j)(p_j)$ by $\rho(t)^{\pm 1}$. (If $v_j$ is trivial, then $S(\rho|_{\F_n},v_j,p_j)$ is just the point $p_j$).

Let $q=\rho(v_k)(p_k)$ be the last element of $S$.

Choosing $S$ so that $q$ is large enough, we can assume that $\min S(\rho,\bar w,q)>\max K$, where we recall that $\bar w=tw_1tw_2tw_3$. We consider, for $i=1,\ldots,n$:
\begin{itemize}
\item $d_{ij} = d_{v_j}(S(\rho,v_j,p_j),i) \text{ for } j=1,\ldots,k$
\item $d_i^{(3)}=d_{w_3}( S(\rho,w_3,q_3),i)$, where  $q_3=q$.
\item $d_i^{(2)}=d_{w_2}( S(\rho,w_2,q_2),i)$, where  $q_2=\rho(tw_3)(q)$.
\item $d_i^{(1)}=d_{w_1}( S(\rho,w_1,q_1),i)$, where $q_1=\rho(tw_2tw_3)(q)$.

\end{itemize}

By Lemma \ref{increasing} there exists $i_0$ with $d_{i_0}^{(3)}\geq\max_j d_{i_0j}$. Let $l\in\{1,2,3\}$ be such that $d_{i_0}^{(l)}=\max\{d_{i_0}^{(1)},d_{i_0}^{(2)},d_{i_0}^{(3)}\}$.

Let $w=w_{l+1}^{-1}w_l$ where $l+1$ is taken mod 3. Writing $d_i=d_w(S(\rho,w,q_l),i)$, we have that $d^{(l)}_{i_0}\leq d_{i_0}$ since $w_l$ is the final segment of $w$. That is because the string $w_{l+1}^{-1}w_l$ is reduced, which can be assumed by Lemma \ref{lasmagias}. 


We apply Lemma \ref{tronco} to $\rho|_{\F_n}$, setting:
\begin{itemize}
\item $v_1,\ldots,v_k$ as given above, $v_{k+1}=w_1$, $v_{k+2}=w_2$, $v_{k+3}=w_3$, and $w=w_{l+1}^{-1}w_l$ (also as above).
\item $p_1\ldots,p_k$ as given above, $p_{k+1}=q_1$, $p_{k+2}=q_2$, $p_{k+3}=q_3$, and $p=q_l$.
\end{itemize}
The hypotheses of Lemma \ref{tronco} hold by the previous discussion. Let $\rho'\in\text{Rep}(\F_n,Homeo_{+}(\R)$ be the representation given by Lemma \ref{tronco}.



Let $g_j=\rho'(w_j)$ for $j=1,2,3$. We consider the equation 
$$(E) \ \ \ Xg_1Xg_2Xg_3=id $$

Point 1 in Lemma \ref{tronco} guarantees that $g_j$ agrees with $\rho(w_j)$ up to $q_j$. Thus $\rho(t)$ is a partial solution of $(E)$ when restricted to $(-\infty,z)$, where 
$z=\max\{g_1(q_1),g_2(q_2),g_3(q_3)\}$.  


Let $h=\rho(t)|_{(-\infty,z)}$.  We consider a cyclic permutation of $(E)$ so that $e_h=q_l$ (for $(E)$ itself we have $e_h=q$).

By point 3 in Lemma \ref{tronco} we have $g_l(x)\neq g_{l+1}(x)$ for all $x\geq q_l$. This allows us to apply Lemma \ref{globalema}, so $h$ can be extended to a maximal solution $\psi\in Homeo_{+}(\R)$. This gives rise to a representation $\bar \rho \in\text{Rep}(G,Homeo_{+}(\R))$ with $\bar \rho|_{\F_n}=\rho'$ and $\bar \rho(t)=\psi$. 

This $\bar\rho$ is a small perturbation of $\rho$ because of point 1 in Lemma \ref{tronco} and the fact that $\psi$ agrees with $\rho(t)$ on $(-\infty,z]$. Also by this fact and point 2 in Lemma  \ref{tronco} we see that neither $S$ nor $S(\rho,\bar w,q)$ are affected by this perturbation, so $p=q_l$ is in the orbit of $0$ under $\bar\rho$. Then we use point 4 in Lemma \ref{tronco} to conclude that the stabilizer of $0$ under $\bar\rho$ is non trivial. Thus Proposition \ref{aislado} finishes the proof.

\begin{small}

\textit{Juan Alonso}

Fac. Ciencias, Universidad de la Republica Uruguay

juan@cmat.edu.uy

\bigskip

\textit{Joaquin Brum}

Fac. Ingenieria, Universidad de la Republica Uruguay

joaquinbrum@fing.edu.uy

\end{small}

\end{document}